\documentclass[10pt,a4paper]{article}
\usepackage{amssymb}
\usepackage{amsmath}
\usepackage{amsthm}
\usepackage[T2A]{fontenc}
\usepackage{graphicx}
\usepackage{tikz}

\newtheorem{theorem}{Theorem}[section]
\newtheorem{proposition}[theorem]{Proposition}
\newtheorem{corollary}[theorem]{Corollary}
\newtheorem{lemma}[theorem]{Lemma}

\theoremstyle{definition}

\newtheorem{remark}[theorem]{Remark}

\newcommand{\fighosts}{%
}

\newcommand{\figcorner}{%
\begin{tikzpicture}[x=3.300mm,y=3.300mm]
  \begin{scope}
    \clip (0,0) rectangle (13,13);
    \fill[black!13] (-20.0,35.6) -- (33.0,-17.4) -- (33.0,-22.6) -- (-20.0,30.4) -- cycle;
  \end{scope}
  \begin{scope}
    \clip (13,0) rectangle (18,-5);
    \fill[black!13] (-20.0,35.6) -- (33.0,-17.4) -- (33.0,-22.6) -- (-20.0,30.4) -- cycle;
  \end{scope}
  \draw[line width=0.5pt] (0,0) rectangle (13,13);
  \draw[line width=0.5pt] (13,0) rectangle (18,-5);
  \node at (3.1,3.1) {$Q$};
  \node at (17.0,-1.0) {$D$};
  \draw[|<->|,line width=0.3pt] (0,-7.4) -- (13,-7.4);
  \node[below] at (6.5,-7.1) {\small $N$};
  \draw[|<->|,line width=0.3pt] (13,-7.4) -- (18,-7.4);
  \node[below] at (15.5,-7.1) {\small $t$};
\end{tikzpicture}%
}

\newcommand{\figsplit}{%
\begin{tikzpicture}
\begin{scope}[xshift=0.000mm,x=2.780mm,y=2.780mm]
  \fill[black!11] (1,3) rectangle (12,14);
  \draw[line width=0.5pt] (1,3) rectangle (12,14);
  \node at (6.5,8.5) {\small $H[B]$};
  \draw[black!35,line width=0.15pt] (0,0) -- (0,3);
  \draw[black!35,line width=0.15pt] (1,0) -- (1,3);
  \draw[black!35,line width=0.15pt] (2,0) -- (2,3);
  \draw[black!35,line width=0.15pt] (3,0) -- (3,3);
  \draw[black!35,line width=0.15pt] (4,0) -- (4,3);
  \draw[black!35,line width=0.15pt] (5,0) -- (5,3);
  \draw[black!35,line width=0.15pt] (6,0) -- (6,3);
  \draw[black!35,line width=0.15pt] (7,0) -- (7,3);
  \draw[black!35,line width=0.15pt] (8,0) -- (8,3);
  \draw[black!35,line width=0.15pt] (9,0) -- (9,3);
  \draw[black!35,line width=0.15pt] (10,0) -- (10,3);
  \draw[black!35,line width=0.15pt] (11,0) -- (11,3);
  \draw[black!35,line width=0.15pt] (12,0) -- (12,3);
  \draw[black!35,line width=0.15pt] (13,0) -- (13,3);
  \draw[black!35,line width=0.15pt] (14,0) -- (14,3);
  \draw[black!35,line width=0.15pt] (0,0) -- (14,0);
  \draw[black!35,line width=0.15pt] (0,1) -- (14,1);
  \draw[black!35,line width=0.15pt] (0,2) -- (14,2);
  \draw[black!35,line width=0.15pt] (0,3) -- (14,3);
  \draw[black!35,line width=0.15pt] (0,3) -- (0,14);
  \draw[black!35,line width=0.15pt] (1,3) -- (1,14);
  \draw[black!35,line width=0.15pt] (0,3) -- (1,3);
  \draw[black!35,line width=0.15pt] (0,4) -- (1,4);
  \draw[black!35,line width=0.15pt] (0,5) -- (1,5);
  \draw[black!35,line width=0.15pt] (0,6) -- (1,6);
  \draw[black!35,line width=0.15pt] (0,7) -- (1,7);
  \draw[black!35,line width=0.15pt] (0,8) -- (1,8);
  \draw[black!35,line width=0.15pt] (0,9) -- (1,9);
  \draw[black!35,line width=0.15pt] (0,10) -- (1,10);
  \draw[black!35,line width=0.15pt] (0,11) -- (1,11);
  \draw[black!35,line width=0.15pt] (0,12) -- (1,12);
  \draw[black!35,line width=0.15pt] (0,13) -- (1,13);
  \draw[black!35,line width=0.15pt] (0,14) -- (1,14);
  \draw[black!35,line width=0.15pt] (12,3) -- (12,14);
  \draw[black!35,line width=0.15pt] (13,3) -- (13,14);
  \draw[black!35,line width=0.15pt] (14,3) -- (14,14);
  \draw[black!35,line width=0.15pt] (12,3) -- (14,3);
  \draw[black!35,line width=0.15pt] (12,4) -- (14,4);
  \draw[black!35,line width=0.15pt] (12,5) -- (14,5);
  \draw[black!35,line width=0.15pt] (12,6) -- (14,6);
  \draw[black!35,line width=0.15pt] (12,7) -- (14,7);
  \draw[black!35,line width=0.15pt] (12,8) -- (14,8);
  \draw[black!35,line width=0.15pt] (12,9) -- (14,9);
  \draw[black!35,line width=0.15pt] (12,10) -- (14,10);
  \draw[black!35,line width=0.15pt] (12,11) -- (14,11);
  \draw[black!35,line width=0.15pt] (12,12) -- (14,12);
  \draw[black!35,line width=0.15pt] (12,13) -- (14,13);
  \draw[black!35,line width=0.15pt] (12,14) -- (14,14);
  \draw[line width=0.5pt] (0,0) rectangle (14,14);
  \fill (0.2657,2.3589) -- (0.1708,2.6710) -- (0.2538,2.4641) -- (0.3376,2.7266) -- (0.4179,2.4487) -- (0.5000,2.7582) -- (0.5821,2.4487) -- (0.6624,2.7266) -- (0.7462,2.4641) -- (0.8292,2.6710) -- (0.7343,2.3589) -- cycle;
  \fill (0.2341,2.2521) rectangle (0.7659,2.3512);
  \fill (0.1546,2.1546) -- (0.8454,2.1546) -- (0.7745,2.2452) -- (0.2255,2.2452) -- cycle;
  \fill (0.1708,2.6710) circle (0.1878mm);
  \fill (0.3376,2.7266) circle (0.1878mm);
  \fill (0.5000,2.7582) circle (0.1878mm);
  \fill (0.6624,2.7266) circle (0.1878mm);
  \fill (0.8292,2.6710) circle (0.1878mm);
  \fill (0.2657,1.3589) -- (0.1708,1.6710) -- (0.2538,1.4641) -- (0.3376,1.7266) -- (0.4179,1.4487) -- (0.5000,1.7582) -- (0.5821,1.4487) -- (0.6624,1.7266) -- (0.7462,1.4641) -- (0.8292,1.6710) -- (0.7343,1.3589) -- cycle;
  \fill (0.2341,1.2521) rectangle (0.7659,1.3512);
  \fill (0.1546,1.1546) -- (0.8454,1.1546) -- (0.7745,1.2452) -- (0.2255,1.2452) -- cycle;
  \fill (0.1708,1.6710) circle (0.1878mm);
  \fill (0.3376,1.7266) circle (0.1878mm);
  \fill (0.5000,1.7582) circle (0.1878mm);
  \fill (0.6624,1.7266) circle (0.1878mm);
  \fill (0.8292,1.6710) circle (0.1878mm);
  \fill (12.2657,0.3589) -- (12.1708,0.6710) -- (12.2538,0.4641) -- (12.3376,0.7266) -- (12.4179,0.4487) -- (12.5000,0.7582) -- (12.5821,0.4487) -- (12.6624,0.7266) -- (12.7462,0.4641) -- (12.8292,0.6710) -- (12.7343,0.3589) -- cycle;
  \fill (12.2341,0.2520) rectangle (12.7659,0.3512);
  \fill (12.1546,0.1546) -- (12.8454,0.1546) -- (12.7745,0.2452) -- (12.2255,0.2452) -- cycle;
  \fill (12.1708,0.6710) circle (0.1878mm);
  \fill (12.3376,0.7266) circle (0.1878mm);
  \fill (12.5000,0.7582) circle (0.1878mm);
  \fill (12.6624,0.7266) circle (0.1878mm);
  \fill (12.8292,0.6710) circle (0.1878mm);
  \fill (13.2657,0.3589) -- (13.1708,0.6710) -- (13.2538,0.4641) -- (13.3376,0.7266) -- (13.4179,0.4487) -- (13.5000,0.7582) -- (13.5821,0.4487) -- (13.6624,0.7266) -- (13.7462,0.4641) -- (13.8292,0.6710) -- (13.7343,0.3589) -- cycle;
  \fill (13.2341,0.2520) rectangle (13.7659,0.3512);
  \fill (13.1546,0.1546) -- (13.8454,0.1546) -- (13.7745,0.2452) -- (13.2255,0.2452) -- cycle;
  \fill (13.1708,0.6710) circle (0.1878mm);
  \fill (13.3376,0.7266) circle (0.1878mm);
  \fill (13.5000,0.7582) circle (0.1878mm);
  \fill (13.6624,0.7266) circle (0.1878mm);
  \fill (13.8292,0.6710) circle (0.1878mm);
  \node[above] at (0.5,14) {\small $L$};
  \node[above] at (13.0,14) {\small $t-L$};
  \node[right] at (14,1.5) {\small $t$};
  \node[below] at (7.0,-0.35) {\small $t=3$};
\end{scope}
\begin{scope}[xshift=46.920mm,x=2.780mm,y=2.780mm]
  \fill[black!11] (2,4) rectangle (13,15);
  \draw[line width=0.5pt] (2,4) rectangle (13,15);
  \node at (7.5,9.5) {\small $H[B]$};
  \draw[black!35,line width=0.15pt] (0,0) -- (0,4);
  \draw[black!35,line width=0.15pt] (1,0) -- (1,4);
  \draw[black!35,line width=0.15pt] (2,0) -- (2,4);
  \draw[black!35,line width=0.15pt] (3,0) -- (3,4);
  \draw[black!35,line width=0.15pt] (4,0) -- (4,4);
  \draw[black!35,line width=0.15pt] (5,0) -- (5,4);
  \draw[black!35,line width=0.15pt] (6,0) -- (6,4);
  \draw[black!35,line width=0.15pt] (7,0) -- (7,4);
  \draw[black!35,line width=0.15pt] (8,0) -- (8,4);
  \draw[black!35,line width=0.15pt] (9,0) -- (9,4);
  \draw[black!35,line width=0.15pt] (10,0) -- (10,4);
  \draw[black!35,line width=0.15pt] (11,0) -- (11,4);
  \draw[black!35,line width=0.15pt] (12,0) -- (12,4);
  \draw[black!35,line width=0.15pt] (13,0) -- (13,4);
  \draw[black!35,line width=0.15pt] (14,0) -- (14,4);
  \draw[black!35,line width=0.15pt] (15,0) -- (15,4);
  \draw[black!35,line width=0.15pt] (0,0) -- (15,0);
  \draw[black!35,line width=0.15pt] (0,1) -- (15,1);
  \draw[black!35,line width=0.15pt] (0,2) -- (15,2);
  \draw[black!35,line width=0.15pt] (0,3) -- (15,3);
  \draw[black!35,line width=0.15pt] (0,4) -- (15,4);
  \draw[black!35,line width=0.15pt] (0,4) -- (0,15);
  \draw[black!35,line width=0.15pt] (1,4) -- (1,15);
  \draw[black!35,line width=0.15pt] (2,4) -- (2,15);
  \draw[black!35,line width=0.15pt] (0,4) -- (2,4);
  \draw[black!35,line width=0.15pt] (0,5) -- (2,5);
  \draw[black!35,line width=0.15pt] (0,6) -- (2,6);
  \draw[black!35,line width=0.15pt] (0,7) -- (2,7);
  \draw[black!35,line width=0.15pt] (0,8) -- (2,8);
  \draw[black!35,line width=0.15pt] (0,9) -- (2,9);
  \draw[black!35,line width=0.15pt] (0,10) -- (2,10);
  \draw[black!35,line width=0.15pt] (0,11) -- (2,11);
  \draw[black!35,line width=0.15pt] (0,12) -- (2,12);
  \draw[black!35,line width=0.15pt] (0,13) -- (2,13);
  \draw[black!35,line width=0.15pt] (0,14) -- (2,14);
  \draw[black!35,line width=0.15pt] (0,15) -- (2,15);
  \draw[black!35,line width=0.15pt] (13,4) -- (13,15);
  \draw[black!35,line width=0.15pt] (14,4) -- (14,15);
  \draw[black!35,line width=0.15pt] (15,4) -- (15,15);
  \draw[black!35,line width=0.15pt] (13,4) -- (15,4);
  \draw[black!35,line width=0.15pt] (13,5) -- (15,5);
  \draw[black!35,line width=0.15pt] (13,6) -- (15,6);
  \draw[black!35,line width=0.15pt] (13,7) -- (15,7);
  \draw[black!35,line width=0.15pt] (13,8) -- (15,8);
  \draw[black!35,line width=0.15pt] (13,9) -- (15,9);
  \draw[black!35,line width=0.15pt] (13,10) -- (15,10);
  \draw[black!35,line width=0.15pt] (13,11) -- (15,11);
  \draw[black!35,line width=0.15pt] (13,12) -- (15,12);
  \draw[black!35,line width=0.15pt] (13,13) -- (15,13);
  \draw[black!35,line width=0.15pt] (13,14) -- (15,14);
  \draw[black!35,line width=0.15pt] (13,15) -- (15,15);
  \draw[line width=0.5pt] (0,0) rectangle (15,15);
  \fill (13.2657,3.3589) -- (13.1708,3.6710) -- (13.2538,3.4641) -- (13.3376,3.7266) -- (13.4179,3.4487) -- (13.5000,3.7582) -- (13.5821,3.4487) -- (13.6624,3.7266) -- (13.7462,3.4641) -- (13.8292,3.6710) -- (13.7343,3.3589) -- cycle;
  \fill (13.2341,3.2521) rectangle (13.7659,3.3512);
  \fill (13.1546,3.1546) -- (13.8454,3.1546) -- (13.7745,3.2452) -- (13.2255,3.2452) -- cycle;
  \fill (13.1708,3.6710) circle (0.1878mm);
  \fill (13.3376,3.7266) circle (0.1878mm);
  \fill (13.5000,3.7582) circle (0.1878mm);
  \fill (13.6624,3.7266) circle (0.1878mm);
  \fill (13.8292,3.6710) circle (0.1878mm);
  \fill (0.2657,2.3589) -- (0.1708,2.6710) -- (0.2538,2.4641) -- (0.3376,2.7266) -- (0.4179,2.4487) -- (0.5000,2.7582) -- (0.5821,2.4487) -- (0.6624,2.7266) -- (0.7462,2.4641) -- (0.8292,2.6710) -- (0.7343,2.3589) -- cycle;
  \fill (0.2341,2.2521) rectangle (0.7659,2.3512);
  \fill (0.1546,2.1546) -- (0.8454,2.1546) -- (0.7745,2.2452) -- (0.2255,2.2452) -- cycle;
  \fill (0.1708,2.6710) circle (0.1878mm);
  \fill (0.3376,2.7266) circle (0.1878mm);
  \fill (0.5000,2.7582) circle (0.1878mm);
  \fill (0.6624,2.7266) circle (0.1878mm);
  \fill (0.8292,2.6710) circle (0.1878mm);
  \fill (1.2657,2.3589) -- (1.1708,2.6710) -- (1.2538,2.4641) -- (1.3376,2.7266) -- (1.4179,2.4487) -- (1.5000,2.7582) -- (1.5821,2.4487) -- (1.6624,2.7266) -- (1.7462,2.4641) -- (1.8292,2.6710) -- (1.7343,2.3589) -- cycle;
  \fill (1.2341,2.2521) rectangle (1.7659,2.3512);
  \fill (1.1546,2.1546) -- (1.8454,2.1546) -- (1.7745,2.2452) -- (1.2255,2.2452) -- cycle;
  \fill (1.1708,2.6710) circle (0.1878mm);
  \fill (1.3376,2.7266) circle (0.1878mm);
  \fill (1.5000,2.7582) circle (0.1878mm);
  \fill (1.6624,2.7266) circle (0.1878mm);
  \fill (1.8292,2.6710) circle (0.1878mm);
  \fill (14.2657,1.3589) -- (14.1708,1.6710) -- (14.2538,1.4641) -- (14.3376,1.7266) -- (14.4179,1.4487) -- (14.5000,1.7582) -- (14.5821,1.4487) -- (14.6624,1.7266) -- (14.7462,1.4641) -- (14.8292,1.6710) -- (14.7343,1.3589) -- cycle;
  \fill (14.2341,1.2521) rectangle (14.7659,1.3512);
  \fill (14.1546,1.1546) -- (14.8454,1.1546) -- (14.7745,1.2452) -- (14.2255,1.2452) -- cycle;
  \fill (14.1708,1.6710) circle (0.1878mm);
  \fill (14.3376,1.7266) circle (0.1878mm);
  \fill (14.5000,1.7582) circle (0.1878mm);
  \fill (14.6624,1.7266) circle (0.1878mm);
  \fill (14.8292,1.6710) circle (0.1878mm);
  \fill (14.2657,0.3589) -- (14.1708,0.6710) -- (14.2538,0.4641) -- (14.3376,0.7266) -- (14.4179,0.4487) -- (14.5000,0.7582) -- (14.5821,0.4487) -- (14.6624,0.7266) -- (14.7462,0.4641) -- (14.8292,0.6710) -- (14.7343,0.3589) -- cycle;
  \fill (14.2341,0.2520) rectangle (14.7659,0.3512);
  \fill (14.1546,0.1546) -- (14.8454,0.1546) -- (14.7745,0.2452) -- (14.2255,0.2452) -- cycle;
  \fill (14.1708,0.6710) circle (0.1878mm);
  \fill (14.3376,0.7266) circle (0.1878mm);
  \fill (14.5000,0.7582) circle (0.1878mm);
  \fill (14.6624,0.7266) circle (0.1878mm);
  \fill (14.8292,0.6710) circle (0.1878mm);
  \node[above] at (1.0,15) {\small $L$};
  \node[above] at (14.0,15) {\small $t-L$};
  \node[right] at (15,2.0) {\small $t$};
  \node[below] at (7.5,-0.35) {\small $t=4$};
\end{scope}
\begin{scope}[xshift=96.620mm,x=2.780mm,y=2.780mm]
  \fill[black!11] (2,5) rectangle (13,16);
  \draw[line width=0.5pt] (2,5) rectangle (13,16);
  \node at (7.5,10.5) {\small $H[B]$};
  \draw[black!35,line width=0.15pt] (0,0) -- (0,5);
  \draw[black!35,line width=0.15pt] (1,0) -- (1,5);
  \draw[black!35,line width=0.15pt] (2,0) -- (2,5);
  \draw[black!35,line width=0.15pt] (3,0) -- (3,5);
  \draw[black!35,line width=0.15pt] (4,0) -- (4,5);
  \draw[black!35,line width=0.15pt] (5,0) -- (5,5);
  \draw[black!35,line width=0.15pt] (6,0) -- (6,5);
  \draw[black!35,line width=0.15pt] (7,0) -- (7,5);
  \draw[black!35,line width=0.15pt] (8,0) -- (8,5);
  \draw[black!35,line width=0.15pt] (9,0) -- (9,5);
  \draw[black!35,line width=0.15pt] (10,0) -- (10,5);
  \draw[black!35,line width=0.15pt] (11,0) -- (11,5);
  \draw[black!35,line width=0.15pt] (12,0) -- (12,5);
  \draw[black!35,line width=0.15pt] (13,0) -- (13,5);
  \draw[black!35,line width=0.15pt] (14,0) -- (14,5);
  \draw[black!35,line width=0.15pt] (15,0) -- (15,5);
  \draw[black!35,line width=0.15pt] (16,0) -- (16,5);
  \draw[black!35,line width=0.15pt] (0,0) -- (16,0);
  \draw[black!35,line width=0.15pt] (0,1) -- (16,1);
  \draw[black!35,line width=0.15pt] (0,2) -- (16,2);
  \draw[black!35,line width=0.15pt] (0,3) -- (16,3);
  \draw[black!35,line width=0.15pt] (0,4) -- (16,4);
  \draw[black!35,line width=0.15pt] (0,5) -- (16,5);
  \draw[black!35,line width=0.15pt] (0,5) -- (0,16);
  \draw[black!35,line width=0.15pt] (1,5) -- (1,16);
  \draw[black!35,line width=0.15pt] (2,5) -- (2,16);
  \draw[black!35,line width=0.15pt] (0,5) -- (2,5);
  \draw[black!35,line width=0.15pt] (0,6) -- (2,6);
  \draw[black!35,line width=0.15pt] (0,7) -- (2,7);
  \draw[black!35,line width=0.15pt] (0,8) -- (2,8);
  \draw[black!35,line width=0.15pt] (0,9) -- (2,9);
  \draw[black!35,line width=0.15pt] (0,10) -- (2,10);
  \draw[black!35,line width=0.15pt] (0,11) -- (2,11);
  \draw[black!35,line width=0.15pt] (0,12) -- (2,12);
  \draw[black!35,line width=0.15pt] (0,13) -- (2,13);
  \draw[black!35,line width=0.15pt] (0,14) -- (2,14);
  \draw[black!35,line width=0.15pt] (0,15) -- (2,15);
  \draw[black!35,line width=0.15pt] (0,16) -- (2,16);
  \draw[black!35,line width=0.15pt] (13,5) -- (13,16);
  \draw[black!35,line width=0.15pt] (14,5) -- (14,16);
  \draw[black!35,line width=0.15pt] (15,5) -- (15,16);
  \draw[black!35,line width=0.15pt] (16,5) -- (16,16);
  \draw[black!35,line width=0.15pt] (13,5) -- (16,5);
  \draw[black!35,line width=0.15pt] (13,6) -- (16,6);
  \draw[black!35,line width=0.15pt] (13,7) -- (16,7);
  \draw[black!35,line width=0.15pt] (13,8) -- (16,8);
  \draw[black!35,line width=0.15pt] (13,9) -- (16,9);
  \draw[black!35,line width=0.15pt] (13,10) -- (16,10);
  \draw[black!35,line width=0.15pt] (13,11) -- (16,11);
  \draw[black!35,line width=0.15pt] (13,12) -- (16,12);
  \draw[black!35,line width=0.15pt] (13,13) -- (16,13);
  \draw[black!35,line width=0.15pt] (13,14) -- (16,14);
  \draw[black!35,line width=0.15pt] (13,15) -- (16,15);
  \draw[black!35,line width=0.15pt] (13,16) -- (16,16);
  \draw[line width=0.5pt] (0,0) rectangle (16,16);
  \fill (13.2657,4.3589) -- (13.1708,4.6710) -- (13.2538,4.4641) -- (13.3376,4.7266) -- (13.4179,4.4487) -- (13.5000,4.7582) -- (13.5821,4.4487) -- (13.6624,4.7266) -- (13.7462,4.4641) -- (13.8292,4.6710) -- (13.7343,4.3589) -- cycle;
  \fill (13.2341,4.2520) rectangle (13.7659,4.3512);
  \fill (13.1546,4.1546) -- (13.8454,4.1546) -- (13.7745,4.2452) -- (13.2255,4.2452) -- cycle;
  \fill (13.1708,4.6710) circle (0.1878mm);
  \fill (13.3376,4.7266) circle (0.1878mm);
  \fill (13.5000,4.7582) circle (0.1878mm);
  \fill (13.6624,4.7266) circle (0.1878mm);
  \fill (13.8292,4.6710) circle (0.1878mm);
  \fill (14.2657,4.3589) -- (14.1708,4.6710) -- (14.2538,4.4641) -- (14.3376,4.7266) -- (14.4179,4.4487) -- (14.5000,4.7582) -- (14.5821,4.4487) -- (14.6624,4.7266) -- (14.7462,4.4641) -- (14.8292,4.6710) -- (14.7343,4.3589) -- cycle;
  \fill (14.2341,4.2520) rectangle (14.7659,4.3512);
  \fill (14.1546,4.1546) -- (14.8454,4.1546) -- (14.7745,4.2452) -- (14.2255,4.2452) -- cycle;
  \fill (14.1708,4.6710) circle (0.1878mm);
  \fill (14.3376,4.7266) circle (0.1878mm);
  \fill (14.5000,4.7582) circle (0.1878mm);
  \fill (14.6624,4.7266) circle (0.1878mm);
  \fill (14.8292,4.6710) circle (0.1878mm);
  \fill (1.2657,3.3589) -- (1.1708,3.6710) -- (1.2538,3.4641) -- (1.3376,3.7266) -- (1.4179,3.4487) -- (1.5000,3.7582) -- (1.5821,3.4487) -- (1.6624,3.7266) -- (1.7462,3.4641) -- (1.8292,3.6710) -- (1.7343,3.3589) -- cycle;
  \fill (1.2341,3.2521) rectangle (1.7659,3.3512);
  \fill (1.1546,3.1546) -- (1.8454,3.1546) -- (1.7745,3.2452) -- (1.2255,3.2452) -- cycle;
  \fill (1.1708,3.6710) circle (0.1878mm);
  \fill (1.3376,3.7266) circle (0.1878mm);
  \fill (1.5000,3.7582) circle (0.1878mm);
  \fill (1.6624,3.7266) circle (0.1878mm);
  \fill (1.8292,3.6710) circle (0.1878mm);
  \fill (0.2657,2.3589) -- (0.1708,2.6710) -- (0.2538,2.4641) -- (0.3376,2.7266) -- (0.4179,2.4487) -- (0.5000,2.7582) -- (0.5821,2.4487) -- (0.6624,2.7266) -- (0.7462,2.4641) -- (0.8292,2.6710) -- (0.7343,2.3589) -- cycle;
  \fill (0.2341,2.2521) rectangle (0.7659,2.3512);
  \fill (0.1546,2.1546) -- (0.8454,2.1546) -- (0.7745,2.2452) -- (0.2255,2.2452) -- cycle;
  \fill (0.1708,2.6710) circle (0.1878mm);
  \fill (0.3376,2.7266) circle (0.1878mm);
  \fill (0.5000,2.7582) circle (0.1878mm);
  \fill (0.6624,2.7266) circle (0.1878mm);
  \fill (0.8292,2.6710) circle (0.1878mm);
  \fill (15.2657,1.3589) -- (15.1708,1.6710) -- (15.2538,1.4641) -- (15.3376,1.7266) -- (15.4179,1.4487) -- (15.5000,1.7582) -- (15.5821,1.4487) -- (15.6624,1.7266) -- (15.7462,1.4641) -- (15.8292,1.6710) -- (15.7343,1.3589) -- cycle;
  \fill (15.2341,1.2521) rectangle (15.7659,1.3512);
  \fill (15.1546,1.1546) -- (15.8454,1.1546) -- (15.7745,1.2452) -- (15.2255,1.2452) -- cycle;
  \fill (15.1708,1.6710) circle (0.1878mm);
  \fill (15.3376,1.7266) circle (0.1878mm);
  \fill (15.5000,1.7582) circle (0.1878mm);
  \fill (15.6624,1.7266) circle (0.1878mm);
  \fill (15.8292,1.6710) circle (0.1878mm);
  \fill (15.2657,0.3589) -- (15.1708,0.6710) -- (15.2538,0.4641) -- (15.3376,0.7266) -- (15.4179,0.4487) -- (15.5000,0.7582) -- (15.5821,0.4487) -- (15.6624,0.7266) -- (15.7462,0.4641) -- (15.8292,0.6710) -- (15.7343,0.3589) -- cycle;
  \fill (15.2341,0.2520) rectangle (15.7659,0.3512);
  \fill (15.1546,0.1546) -- (15.8454,0.1546) -- (15.7745,0.2452) -- (15.2255,0.2452) -- cycle;
  \fill (15.1708,0.6710) circle (0.1878mm);
  \fill (15.3376,0.7266) circle (0.1878mm);
  \fill (15.5000,0.7582) circle (0.1878mm);
  \fill (15.6624,0.7266) circle (0.1878mm);
  \fill (15.8292,0.6710) circle (0.1878mm);
  \node[above] at (1.0,16) {\small $L$};
  \node[above] at (14.5,16) {\small $t-L$};
  \node[right] at (16,2.5) {\small $t$};
  \node[below] at (8.0,-0.35) {\small $t=5$};
\end{scope}
\end{tikzpicture}%
}

\newcommand{\figsfifteen}{%
\begin{tikzpicture}
\begin{scope}[xshift=0.0000mm,yshift=0.0000mm,x=4.0000mm,y=4.0000mm]
  \fill[black!12] (0,14) rectangle (1,15) (14,14) rectangle (15,15) (1,13) rectangle (2,14) (13,13) rectangle (14,14) (2,12) rectangle (3,13) (12,12) rectangle (13,13) (3,11) rectangle (4,12) (11,11) rectangle (12,12) (4,10) rectangle (5,11) (10,10) rectangle (11,11) (5,9) rectangle (6,10) (9,9) rectangle (10,10) (6,8) rectangle (7,9) (8,8) rectangle (9,9) (7,7) rectangle (8,8) (6,6) rectangle (7,7) (8,6) rectangle (9,7) (5,5) rectangle (6,6) (9,5) rectangle (10,6) (4,4) rectangle (5,5) (10,4) rectangle (11,5) (3,3) rectangle (4,4) (11,3) rectangle (12,4) (2,2) rectangle (3,3) (12,2) rectangle (13,3) (1,1) rectangle (2,2) (13,1) rectangle (14,2) (0,0) rectangle (1,1) (14,0) rectangle (15,1);
  \draw[black!30,line width=0.15pt] (0,0) -- (0,15);
  \draw[black!30,line width=0.15pt] (1,0) -- (1,15);
  \draw[black!30,line width=0.15pt] (2,0) -- (2,15);
  \draw[black!30,line width=0.15pt] (3,0) -- (3,15);
  \draw[black!30,line width=0.15pt] (4,0) -- (4,15);
  \draw[black!30,line width=0.15pt] (5,0) -- (5,15);
  \draw[black!30,line width=0.15pt] (6,0) -- (6,15);
  \draw[black!30,line width=0.15pt] (7,0) -- (7,15);
  \draw[black!30,line width=0.15pt] (8,0) -- (8,15);
  \draw[black!30,line width=0.15pt] (9,0) -- (9,15);
  \draw[black!30,line width=0.15pt] (10,0) -- (10,15);
  \draw[black!30,line width=0.15pt] (11,0) -- (11,15);
  \draw[black!30,line width=0.15pt] (12,0) -- (12,15);
  \draw[black!30,line width=0.15pt] (13,0) -- (13,15);
  \draw[black!30,line width=0.15pt] (14,0) -- (14,15);
  \draw[black!30,line width=0.15pt] (15,0) -- (15,15);
  \draw[black!30,line width=0.15pt] (0,0) -- (15,0);
  \draw[black!30,line width=0.15pt] (0,1) -- (15,1);
  \draw[black!30,line width=0.15pt] (0,2) -- (15,2);
  \draw[black!30,line width=0.15pt] (0,3) -- (15,3);
  \draw[black!30,line width=0.15pt] (0,4) -- (15,4);
  \draw[black!30,line width=0.15pt] (0,5) -- (15,5);
  \draw[black!30,line width=0.15pt] (0,6) -- (15,6);
  \draw[black!30,line width=0.15pt] (0,7) -- (15,7);
  \draw[black!30,line width=0.15pt] (0,8) -- (15,8);
  \draw[black!30,line width=0.15pt] (0,9) -- (15,9);
  \draw[black!30,line width=0.15pt] (0,10) -- (15,10);
  \draw[black!30,line width=0.15pt] (0,11) -- (15,11);
  \draw[black!30,line width=0.15pt] (0,12) -- (15,12);
  \draw[black!30,line width=0.15pt] (0,13) -- (15,13);
  \draw[black!30,line width=0.15pt] (0,14) -- (15,14);
  \draw[black!30,line width=0.15pt] (0,15) -- (15,15);
  \draw[line width=0.5pt] (0,0) rectangle (15,15);
  \fill (3.2657,14.3589) -- (3.1708,14.6710) -- (3.2538,14.4641) -- (3.3375,14.7266) -- (3.4179,14.4487) -- (3.5000,14.7582) -- (3.5821,14.4487) -- (3.6625,14.7266) -- (3.7462,14.4641) -- (3.8292,14.6710) -- (3.7343,14.3589) -- cycle;
  \fill (3.2341,14.2521) rectangle (3.7659,14.3512);
  \fill (3.1546,14.1546) -- (3.8454,14.1546) -- (3.7745,14.2452) -- (3.2255,14.2452) -- cycle;
  \fill (3.1708,14.6710) circle (0.2702mm);
  \fill (3.3375,14.7266) circle (0.2702mm);
  \fill (3.5000,14.7582) circle (0.2702mm);
  \fill (3.6625,14.7266) circle (0.2702mm);
  \fill (3.8292,14.6710) circle (0.2702mm);
  \fill (8.2657,14.3589) -- (8.1708,14.6710) -- (8.2538,14.4641) -- (8.3376,14.7266) -- (8.4179,14.4487) -- (8.5000,14.7582) -- (8.5821,14.4487) -- (8.6624,14.7266) -- (8.7462,14.4641) -- (8.8292,14.6710) -- (8.7343,14.3589) -- cycle;
  \fill (8.2341,14.2521) rectangle (8.7659,14.3512);
  \fill (8.1546,14.1546) -- (8.8454,14.1546) -- (8.7745,14.2452) -- (8.2255,14.2452) -- cycle;
  \fill (8.1708,14.6710) circle (0.2702mm);
  \fill (8.3376,14.7266) circle (0.2702mm);
  \fill (8.5000,14.7582) circle (0.2702mm);
  \fill (8.6624,14.7266) circle (0.2702mm);
  \fill (8.8292,14.6710) circle (0.2702mm);
  \fill (5.2657,13.3589) -- (5.1708,13.6710) -- (5.2538,13.4641) -- (5.3376,13.7266) -- (5.4179,13.4487) -- (5.5000,13.7582) -- (5.5821,13.4487) -- (5.6624,13.7266) -- (5.7462,13.4641) -- (5.8292,13.6710) -- (5.7343,13.3589) -- cycle;
  \fill (5.2341,13.2521) rectangle (5.7659,13.3512);
  \fill (5.1546,13.1546) -- (5.8454,13.1546) -- (5.7745,13.2452) -- (5.2255,13.2452) -- cycle;
  \fill (5.1708,13.6710) circle (0.2702mm);
  \fill (5.3376,13.7266) circle (0.2702mm);
  \fill (5.5000,13.7582) circle (0.2702mm);
  \fill (5.6624,13.7266) circle (0.2702mm);
  \fill (5.8292,13.6710) circle (0.2702mm);
  \fill (14.2657,12.3589) -- (14.1708,12.6710) -- (14.2538,12.4641) -- (14.3376,12.7266) -- (14.4179,12.4487) -- (14.5000,12.7582) -- (14.5821,12.4487) -- (14.6624,12.7266) -- (14.7462,12.4641) -- (14.8292,12.6710) -- (14.7343,12.3589) -- cycle;
  \fill (14.2341,12.2521) rectangle (14.7659,12.3512);
  \fill (14.1546,12.1546) -- (14.8454,12.1546) -- (14.7745,12.2452) -- (14.2255,12.2452) -- cycle;
  \fill (14.1708,12.6710) circle (0.2702mm);
  \fill (14.3376,12.7266) circle (0.2702mm);
  \fill (14.5000,12.7582) circle (0.2702mm);
  \fill (14.6624,12.7266) circle (0.2702mm);
  \fill (14.8292,12.6710) circle (0.2702mm);
  \fill (2.2657,11.3589) -- (2.1708,11.6710) -- (2.2538,11.4641) -- (2.3375,11.7266) -- (2.4179,11.4487) -- (2.5000,11.7582) -- (2.5821,11.4487) -- (2.6625,11.7266) -- (2.7462,11.4641) -- (2.8292,11.6710) -- (2.7343,11.3589) -- cycle;
  \fill (2.2341,11.2521) rectangle (2.7659,11.3512);
  \fill (2.1546,11.1546) -- (2.8454,11.1546) -- (2.7745,11.2452) -- (2.2255,11.2452) -- cycle;
  \fill (2.1708,11.6710) circle (0.2702mm);
  \fill (2.3375,11.7266) circle (0.2702mm);
  \fill (2.5000,11.7582) circle (0.2702mm);
  \fill (2.6625,11.7266) circle (0.2702mm);
  \fill (2.8292,11.6710) circle (0.2702mm);
  \fill (6.2657,10.3589) -- (6.1708,10.6710) -- (6.2538,10.4641) -- (6.3376,10.7266) -- (6.4179,10.4487) -- (6.5000,10.7582) -- (6.5821,10.4487) -- (6.6624,10.7266) -- (6.7462,10.4641) -- (6.8292,10.6710) -- (6.7343,10.3589) -- cycle;
  \fill (6.2341,10.2521) rectangle (6.7659,10.3512);
  \fill (6.1546,10.1546) -- (6.8454,10.1546) -- (6.7745,10.2452) -- (6.2255,10.2452) -- cycle;
  \fill (6.1708,10.6710) circle (0.2702mm);
  \fill (6.3376,10.7266) circle (0.2702mm);
  \fill (6.5000,10.7582) circle (0.2702mm);
  \fill (6.6624,10.7266) circle (0.2702mm);
  \fill (6.8292,10.6710) circle (0.2702mm);
  \fill (11.2657,10.3589) -- (11.1708,10.6710) -- (11.2538,10.4641) -- (11.3376,10.7266) -- (11.4179,10.4487) -- (11.5000,10.7582) -- (11.5821,10.4487) -- (11.6624,10.7266) -- (11.7462,10.4641) -- (11.8292,10.6710) -- (11.7343,10.3589) -- cycle;
  \fill (11.2341,10.2521) rectangle (11.7659,10.3512);
  \fill (11.1546,10.1546) -- (11.8454,10.1546) -- (11.7745,10.2452) -- (11.2255,10.2452) -- cycle;
  \fill (11.1708,10.6710) circle (0.2702mm);
  \fill (11.3376,10.7266) circle (0.2702mm);
  \fill (11.5000,10.7582) circle (0.2702mm);
  \fill (11.6624,10.7266) circle (0.2702mm);
  \fill (11.8292,10.6710) circle (0.2702mm);
  \fill (14.2657,9.3589) -- (14.1708,9.6710) -- (14.2538,9.4641) -- (14.3376,9.7266) -- (14.4179,9.4487) -- (14.5000,9.7582) -- (14.5821,9.4487) -- (14.6624,9.7266) -- (14.7462,9.4641) -- (14.8292,9.6710) -- (14.7343,9.3589) -- cycle;
  \fill (14.2341,9.2521) rectangle (14.7659,9.3512);
  \fill (14.1546,9.1546) -- (14.8454,9.1546) -- (14.7745,9.2452) -- (14.2255,9.2452) -- cycle;
  \fill (14.1708,9.6710) circle (0.2702mm);
  \fill (14.3376,9.7266) circle (0.2702mm);
  \fill (14.5000,9.7582) circle (0.2702mm);
  \fill (14.6624,9.7266) circle (0.2702mm);
  \fill (14.8292,9.6710) circle (0.2702mm);
  \fill (1.2657,8.3589) -- (1.1708,8.6710) -- (1.2538,8.4641) -- (1.3376,8.7266) -- (1.4179,8.4487) -- (1.5000,8.7582) -- (1.5821,8.4487) -- (1.6624,8.7266) -- (1.7462,8.4641) -- (1.8292,8.6710) -- (1.7343,8.3589) -- cycle;
  \fill (1.2341,8.2521) rectangle (1.7659,8.3512);
  \fill (1.1546,8.1546) -- (1.8454,8.1546) -- (1.7745,8.2452) -- (1.2255,8.2452) -- cycle;
  \fill (1.1708,8.6710) circle (0.2702mm);
  \fill (1.3376,8.7266) circle (0.2702mm);
  \fill (1.5000,8.7582) circle (0.2702mm);
  \fill (1.6624,8.7266) circle (0.2702mm);
  \fill (1.8292,8.6710) circle (0.2702mm);
  \fill (7.2657,8.3589) -- (7.1708,8.6710) -- (7.2538,8.4641) -- (7.3376,8.7266) -- (7.4179,8.4487) -- (7.5000,8.7582) -- (7.5821,8.4487) -- (7.6624,8.7266) -- (7.7462,8.4641) -- (7.8292,8.6710) -- (7.7343,8.3589) -- cycle;
  \fill (7.2341,8.2521) rectangle (7.7659,8.3512);
  \fill (7.1546,8.1546) -- (7.8454,8.1546) -- (7.7745,8.2452) -- (7.2255,8.2452) -- cycle;
  \fill (7.1708,8.6710) circle (0.2702mm);
  \fill (7.3376,8.7266) circle (0.2702mm);
  \fill (7.5000,8.7582) circle (0.2702mm);
  \fill (7.6624,8.7266) circle (0.2702mm);
  \fill (7.8292,8.6710) circle (0.2702mm);
  \fill (13.2657,7.3589) -- (13.1708,7.6710) -- (13.2538,7.4641) -- (13.3376,7.7266) -- (13.4179,7.4487) -- (13.5000,7.7582) -- (13.5821,7.4487) -- (13.6624,7.7266) -- (13.7462,7.4641) -- (13.8292,7.6710) -- (13.7343,7.3589) -- cycle;
  \fill (13.2341,7.2520) rectangle (13.7659,7.3512);
  \fill (13.1546,7.1546) -- (13.8454,7.1546) -- (13.7745,7.2452) -- (13.2255,7.2452) -- cycle;
  \fill (13.1708,7.6710) circle (0.2702mm);
  \fill (13.3376,7.7266) circle (0.2702mm);
  \fill (13.5000,7.7582) circle (0.2702mm);
  \fill (13.6624,7.7266) circle (0.2702mm);
  \fill (13.8292,7.6710) circle (0.2702mm);
  \fill (13.2657,6.3589) -- (13.1708,6.6710) -- (13.2538,6.4641) -- (13.3376,6.7266) -- (13.4179,6.4487) -- (13.5000,6.7582) -- (13.5821,6.4487) -- (13.6624,6.7266) -- (13.7462,6.4641) -- (13.8292,6.6710) -- (13.7343,6.3589) -- cycle;
  \fill (13.2341,6.2520) rectangle (13.7659,6.3512);
  \fill (13.1546,6.1546) -- (13.8454,6.1546) -- (13.7745,6.2452) -- (13.2255,6.2452) -- cycle;
  \fill (13.1708,6.6710) circle (0.2702mm);
  \fill (13.3376,6.7266) circle (0.2702mm);
  \fill (13.5000,6.7582) circle (0.2702mm);
  \fill (13.6624,6.7266) circle (0.2702mm);
  \fill (13.8292,6.6710) circle (0.2702mm);
  \fill (0.2657,5.3589) -- (0.1708,5.6710) -- (0.2538,5.4641) -- (0.3376,5.7266) -- (0.4179,5.4487) -- (0.5000,5.7582) -- (0.5821,5.4487) -- (0.6624,5.7266) -- (0.7462,5.4641) -- (0.8292,5.6710) -- (0.7343,5.3589) -- cycle;
  \fill (0.2341,5.2520) rectangle (0.7659,5.3512);
  \fill (0.1546,5.1546) -- (0.8454,5.1546) -- (0.7745,5.2452) -- (0.2255,5.2452) -- cycle;
  \fill (0.1708,5.6710) circle (0.2702mm);
  \fill (0.3376,5.7266) circle (0.2702mm);
  \fill (0.5000,5.7582) circle (0.2702mm);
  \fill (0.6624,5.7266) circle (0.2702mm);
  \fill (0.8292,5.6710) circle (0.2702mm);
  \fill (2.2657,4.3589) -- (2.1708,4.6710) -- (2.2538,4.4641) -- (2.3375,4.7266) -- (2.4179,4.4487) -- (2.5000,4.7582) -- (2.5821,4.4487) -- (2.6625,4.7266) -- (2.7462,4.4641) -- (2.8292,4.6710) -- (2.7343,4.3589) -- cycle;
  \fill (2.2341,4.2520) rectangle (2.7659,4.3512);
  \fill (2.1546,4.1546) -- (2.8454,4.1546) -- (2.7745,4.2452) -- (2.2255,4.2452) -- cycle;
  \fill (2.1708,4.6710) circle (0.2702mm);
  \fill (2.3375,4.7266) circle (0.2702mm);
  \fill (2.5000,4.7582) circle (0.2702mm);
  \fill (2.6625,4.7266) circle (0.2702mm);
  \fill (2.8292,4.6710) circle (0.2702mm);
  \fill (0.2657,3.3589) -- (0.1708,3.6710) -- (0.2538,3.4641) -- (0.3376,3.7266) -- (0.4179,3.4487) -- (0.5000,3.7582) -- (0.5821,3.4487) -- (0.6624,3.7266) -- (0.7462,3.4641) -- (0.8292,3.6710) -- (0.7343,3.3589) -- cycle;
  \fill (0.2341,3.2521) rectangle (0.7659,3.3512);
  \fill (0.1546,3.1546) -- (0.8454,3.1546) -- (0.7745,3.2452) -- (0.2255,3.2452) -- cycle;
  \fill (0.1708,3.6710) circle (0.2702mm);
  \fill (0.3376,3.7266) circle (0.2702mm);
  \fill (0.5000,3.7582) circle (0.2702mm);
  \fill (0.6624,3.7266) circle (0.2702mm);
  \fill (0.8292,3.6710) circle (0.2702mm);
  \fill (5.2657,2.3589) -- (5.1708,2.6710) -- (5.2538,2.4641) -- (5.3376,2.7266) -- (5.4179,2.4487) -- (5.5000,2.7582) -- (5.5821,2.4487) -- (5.6624,2.7266) -- (5.7462,2.4641) -- (5.8292,2.6710) -- (5.7343,2.3589) -- cycle;
  \fill (5.2341,2.2521) rectangle (5.7659,2.3512);
  \fill (5.1546,2.1546) -- (5.8454,2.1546) -- (5.7745,2.2452) -- (5.2255,2.2452) -- cycle;
  \fill (5.1708,2.6710) circle (0.2702mm);
  \fill (5.3376,2.7266) circle (0.2702mm);
  \fill (5.5000,2.7582) circle (0.2702mm);
  \fill (5.6624,2.7266) circle (0.2702mm);
  \fill (5.8292,2.6710) circle (0.2702mm);
  \fill (9.2657,1.3589) -- (9.1708,1.6710) -- (9.2538,1.4641) -- (9.3376,1.7266) -- (9.4179,1.4487) -- (9.5000,1.7582) -- (9.5821,1.4487) -- (9.6624,1.7266) -- (9.7462,1.4641) -- (9.8292,1.6710) -- (9.7343,1.3589) -- cycle;
  \fill (9.2341,1.2521) rectangle (9.7659,1.3512);
  \fill (9.1546,1.1546) -- (9.8454,1.1546) -- (9.7745,1.2452) -- (9.2255,1.2452) -- cycle;
  \fill (9.1708,1.6710) circle (0.2702mm);
  \fill (9.3376,1.7266) circle (0.2702mm);
  \fill (9.5000,1.7582) circle (0.2702mm);
  \fill (9.6624,1.7266) circle (0.2702mm);
  \fill (9.8292,1.6710) circle (0.2702mm);
  \fill (10.2657,1.3589) -- (10.1708,1.6710) -- (10.2538,1.4641) -- (10.3376,1.7266) -- (10.4179,1.4487) -- (10.5000,1.7582) -- (10.5821,1.4487) -- (10.6624,1.7266) -- (10.7462,1.4641) -- (10.8292,1.6710) -- (10.7343,1.3589) -- cycle;
  \fill (10.2341,1.2521) rectangle (10.7659,1.3512);
  \fill (10.1546,1.1546) -- (10.8454,1.1546) -- (10.7745,1.2452) -- (10.2255,1.2452) -- cycle;
  \fill (10.1708,1.6710) circle (0.2702mm);
  \fill (10.3376,1.7266) circle (0.2702mm);
  \fill (10.5000,1.7582) circle (0.2702mm);
  \fill (10.6624,1.7266) circle (0.2702mm);
  \fill (10.8292,1.6710) circle (0.2702mm);
  \fill (4.2657,0.3589) -- (4.1708,0.6710) -- (4.2538,0.4641) -- (4.3376,0.7266) -- (4.4179,0.4487) -- (4.5000,0.7582) -- (4.5821,0.4487) -- (4.6624,0.7266) -- (4.7462,0.4641) -- (4.8292,0.6710) -- (4.7343,0.3589) -- cycle;
  \fill (4.2341,0.2520) rectangle (4.7659,0.3512);
  \fill (4.1546,0.1546) -- (4.8454,0.1546) -- (4.7745,0.2452) -- (4.2255,0.2452) -- cycle;
  \fill (4.1708,0.6710) circle (0.2702mm);
  \fill (4.3376,0.7266) circle (0.2702mm);
  \fill (4.5000,0.7582) circle (0.2702mm);
  \fill (4.6624,0.7266) circle (0.2702mm);
  \fill (4.8292,0.6710) circle (0.2702mm);
  \fill (12.2657,0.3589) -- (12.1708,0.6710) -- (12.2538,0.4641) -- (12.3376,0.7266) -- (12.4179,0.4487) -- (12.5000,0.7582) -- (12.5821,0.4487) -- (12.6624,0.7266) -- (12.7462,0.4641) -- (12.8292,0.6710) -- (12.7343,0.3589) -- cycle;
  \fill (12.2341,0.2520) rectangle (12.7659,0.3512);
  \fill (12.1546,0.1546) -- (12.8454,0.1546) -- (12.7745,0.2452) -- (12.2255,0.2452) -- cycle;
  \fill (12.1708,0.6710) circle (0.2702mm);
  \fill (12.3376,0.7266) circle (0.2702mm);
  \fill (12.5000,0.7582) circle (0.2702mm);
  \fill (12.6624,0.7266) circle (0.2702mm);
  \fill (12.8292,0.6710) circle (0.2702mm);
\end{scope}
\end{tikzpicture}%
}

\newcommand{\figcorners}{%
}

\begin{document}

\title{Closing the gap and settling the problem of queens\\ on an $n\times n$ board, each attacking at most one other}
\author{Kristina Ago$^1$, Bojan Ba\v si\'c$^{1,}$\thanks{Corresponding author: bojan.basic@dmi.uns.ac.rs}\ , Radojka Ciganovi\'c$^2$\\[0.5ex]
\em \small $^1$Department of Mathematics and Informatics, University of Novi Sad,\\
\em \small Trg Dositeja Obradovi\'ca 4, 21000 Novi Sad, Serbia\\
\small \tt kristina.ago@dmi.uns.ac.rs, bojan.basic@dmi.uns.ac.rs\\[1ex]
\em \small $^2$Faculty of Technical Sciences, University of Novi Sad,\\
\em \small Trg Dositeja Obradovi\'ca 6, 21000 Novi Sad, Serbia\\
\small \tt ciganovic.radojka@uns.ac.rs
}
\date{}

\maketitle

\begin{abstract}

Let $q(n)$ denote the largest number of queens that can be placed on an
$n\times n$ chessboard so that no queen attacks more than one other queen. We
prove that $q(n)=\lfloor4n/3\rfloor$ for every $n\geqslant6$, and that
$q(n)=n$ for $n\leqslant5$, which settles a previously conjectural value. As a corollary, we also settle that, in the version of the problem where each queen attacks \emph{exactly} one other queen, the answer is $2\lfloor2n/3\rfloor$, again as previously conjectured.

\emph{Mathematics Subject Classification (2020):} 05B40, 05B30, 05C69, 00A08

\emph{Keywords:} queens problem, chessboard, independence, toroidal queens
\end{abstract}

\section{Introduction}

Few recreational problems have proved as durable as that of placing eight
queens on a chessboard so that no two of them attack each other. It was posed
in $1848$ by the chess composer Max Bezzel~\cite{bezzel} (writing under the pseudonym \emph{Schachfreund}) in the
\emph{Berliner Schachzeitung,} and restated two years later by Nauck \cite{nauck} in the
\emph{Illustrirte Zeitung;} it was Nauck's note that caught the eye of Gauss,
who worked on the problem in his correspondence with Schumacher and at first
found only $72$ of the $92$ solutions. The frequently repeated claim that Gauss
posed the problem, or that he solved it completely, is a textbook illustration
of how a historical error propagates; the matter was disentangled by Campbell~\cite{campbell}. The general problem on an $n\times n$ board was posed by Lionnet~\cite{lionett} in 1869, and the first proof that $n$ nonattacking queens
can be placed whenever $n\geqslant4$ is due to Pauls~\cite{pauls} (and not, as is often written, to Ahrens, though his book \cite{ahrens} may have contributed significantly to popularizing the subject).

What has kept the problem alive is that nearly every question about it other
than the existence question turns out to be hard. As an example, we mention counting; it took more than twenty years to settle the problem, which happened only quite recently. Let $Q(n)$ denote the number of ways of placing $n$ nonattacking
queens on an $n\times n$ board. Rivin, Vardi, and Zimmermann \cite{rivin}
conjectured that $\log Q(n)$ grows like $n\log n$. Luria \cite{luria} proved an upper bound of the
expected shape; matching lower bounds were obtained by Bowtell and Keevash
\cite{bowtell} and, independently, by Luria and Simkin~\cite{luriasimkin};
and finally Simkin \cite{simkin} showed that $Q(n)=((1\pm o(1))ne^{-\alpha})^n$
for a constant $\alpha$, which he located to within $3\cdot10^{-3}$ and which
Nobel, Agrawal, and Boyd \cite{nobel} have since computed to nine decimal
places.

For various variations of this setting, the reader may check some of the surveys \cite{watkins,bellstevens,hedetniemi}. One more result that we want to point out in particular is a classical result of P\'olya \cite{polya} (published in an
appendix to the second volume of the already mentioned Ahrens's book), which 
states that $n$ non-attacking queens can be placed on a toroidal $n\times n$ board if and only if $\gcd(n,6)=1$. Although toroidal boards could seem like quite an abstract notion, not really occurring in everyday life, it turns out (surprisingly?) that they have a significant role in many other more natural-looking problems, and in particular, they are in the center of the present article.

The problem we are focusing on in the present article is the following one. What is the maximal number of queens that can be placed on an $n\times n$ board, in such a way that each of them attacks at most one other queen? The problem probably appeared in print for the first time in the book of Gik \cite{gik}. He proved the upper bound of $\lfloor\frac{4n}3\rfloor$ and showed that $10$ queens can be achieved on an $8\times 8$ board. The problem was brought up again by Kim \cite{kim}, and this reference was then popularized by Gardner \cite{gardner}, which is probably why the problem is often attributed to Kim (and another reason could be that, as Gardner states in his article, Kim thought of the problem already while in high school, in 1975, which is one year before Gik's book). There we can see that other variants of the problem can also be considered, namely, when ``at most one'' is replaced by ``at most $k$'', or when \emph{exactly} $k$ attacks are prescribed instead of at most $k$. Hayes \cite{hayes} settled some of the cases in 1992, and presented the best at-the-time upper bound on other cases.

Therefore, here we focus on the $k=1$ case, \emph{at most} one attack per queen. However, it turns out that ``at most one'' and ``exactly one'' versions are tightly connected, and once we settle the first one, the second one also follows quite cheaply (see Corollary \ref{exac}). We first present some further historical checkpoints. In August 2008, the IBM Corporation posed the problem on its website as a challenge: in particular, it was required to find the best possible arrangement on a $30\times 30$ board (and also an $8\times 8$ board, which is less relevant). The intended solution was to settle the challenge by some efficient computer-based techniques, in particular, by solving it as a Constraint Satisfaction Problem. The sequence of proved optimal values appeared in the OEIS in 2015 \cite{oeis}; that year, Resta \cite{resta} managed to produce optimal solutions for all $n\times n$ boards up to $n=30$, and based on that, he conjectured that the correct value is $\lfloor\frac{4n}3\rfloor$ for every $n$ greater than $5$. The case $n=2017$ was posed by the second-named author of the present article, in collaboration with other members of the problem selection committee, at the Serbian Mathematical Olympiad 2017~\cite{smo}. In this case, the answer again turned out to be $\lfloor\frac{4n}3\rfloor$, but what should also be mentioned here is that this marked possibly the first time when the idea of generating larger boards by embedding toroidal boards into smaller ``host'' boards was applied to this particular problem. The idea itself was certainly not new, as it can be traced all the way back to P\'olya, who used that approach in his already mentioned article, but for the original, non-attacking queens problem, not for a limited number of attacks. Also, the solution page of the mentioned IBM challenge gives a construction for $n=30$ sent by one of the solvers, which is exactly the one that can be obtained by these toroidal embeddings; however, the necessity of considering tori was never understood on that page, and in fact, the claim there that this pattern can be generalized to any board whose side length is divisible by $6$ is actually erroneous.

A real milestone was reached by Makay \cite{makay}, who managed to unleash the full power of the toroidal-embedding machinery and prove that the optimal value is $\lfloor\frac{4n}3\rfloor$ for all large enough $n$. In particular, his construction works for all numbers from $354$ onward, as well as for many values below this cutoff, but significantly many ``holes'' remain. The purpose of this article is to develop a few tricks that are, taken together, strong enough to cover all the remaining cases and settle the problem in full. 

The initial plan was to use Resta's list \cite{resta} up to $n=30$ as known, and to make the article self-contained for all the other values. However, while we were in the finishing stage of preparation of the article, an extended list by Healy \cite{healy} appeared, covering all the cases up to $n=72$. While the cases between $n=31$ and $n=72$ could in principle be handled by our techniques (or some slight modifications that are not in this final version of the article), a few exceptional cases would remain and they would need to be taken care of individually. For that reason, we decided to use Healy's table for $n$ up to $72$ and to cover only the rest, which spared us and the reader such inconveniences, and additionally, enabled us to make some parts simpler.

\emph{Note added after completion of the manuscript.}
As fate would have it, after this manuscript was finished and when the authors were ready to submit it for publication, an independent manuscript by Healy \cite{healyproof} appeared (four days before the submission date of the present manuscript), solving the same problem. What we can say about the comparison between his manuscript and ours is that (of course) the essence of both is again toroidal embedding. What he uses in addition is polishing the idea of adding a new board at the bottom right as much as possible; he generated such boards (with some special properties) up to size $29$, and by combining them, he can solve the problem on all the boards from $n=216$ onward; the same argument also covers a selection of numbers below $216$, while for other ones he relies on some exceptional constructions. An attractive feature of his approach is therefore that its uniform large-order part is built around essentially one additional mechanism beyond the toroidal construction. Our proof uses several different extension mechanisms instead. On the other hand, it is substantially less dependent on a catalogue of computer-generated auxiliary boards: most of our auxiliary boards have a simple visible structure, and furthermore, at least for some value of $n$ onward (after the need for some ``more exotic'' patches stops), the construction becomes recursive and requires no further search or stored configurations, and probably can be easily reconstructed by a human.

\section{Preliminaries and the upper bound}
\label{s2}

We work on the $n\times n$ board, whose rows and columns are numbered
$1,2,\dots,n$; a cell is written as a pair $(r,c)$, its row first. In our figures, rows are enumerated from top to bottom, and columns from left to right. Although this is different from the usual chess convention (where rows are enumerated from bottom to top), we feel that the chess analogy becomes quite muddled once we reach boards of much larger size than $8\times 8$, and thus opting for usual convention for matrices instead of chess seemed to us like a more natural choice. With every
cell we associate four numbers: its row $r$, its column $c$, its
\emph{difference} $r-c$, and its \emph{sum} $r+c$. Two cells lie on a common
line of the board (on a row, a column, a diagonal, or an antidiagonal)
precisely when they agree in one of these four numbers, and two different cells
can agree in at most one of them. (Note: here, by \emph{antidiagonal,} we mean the set of cells with a fixed sum. This usage will be kept in the article when we find necessary to make a distinction between the fixed difference and the fixed sum, but in some other places, when the context is clear, both those kinds of lines will be simply referred to as diagonals. We believe that this will not cause any confusion.)

A \emph{configuration} is a set of cells, thought of as the cells occupied by
queens. The queens are ordinary chess queens, so two of them \emph{attack} each
other if they lie on a common line and no queen stands between them. We call a
configuration \emph{admissible} if no queen attacks more than one other queen,
we let $q(n)$ be the largest number of queens in an admissible configuration on
the $n\times n$ board, and we call an admissible configuration with $q(n)$
queens \emph{optimal.}

\begin{theorem}
\label{upper}
For every positive integer $n$ we have
\[
q(n)\leqslant\left\lfloor\frac{4n}{3}\right\rfloor.
\]
\end{theorem}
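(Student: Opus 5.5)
We would prove the bound by a double-counting argument on rows and columns. Let $k$ be the number of queens in an admissible configuration. Since every queen attacks at most one other queen, the attack graph (vertices are queens, edges join mutually attacking queens) has maximum degree at most $1$, so it is a matching.

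\emph{Step 1: lines with several queens.} Consider any row containing $m\geqslant 1$ queens. Consecutive queens along that row attack each other, so the row contributes $m-1$ edges to the attack graph. The same holds for every column. Attacks along rows and attacks along columns are distinct edges, because two different cells agree in at most one of the four numbers. Let $a$ be the number of nonempty rows and $b$ the number of nonempty columns. Then the number of row-edges is $k-a$ and the number of column-edges is $k-b$. Since the attack graph is a matching, it has at most $\lfloor k/2\rfloor$ edges in total. Hence
\[
(k-a)+(k-b)\leqslant \frac{k}{2}.
\]
Using $a,b\leqslant n$, this gives $2k-2n\leqslant k/2$, that is, $k\leqslant 4n/3$. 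Since $k$ is an integer, $k\leqslant\lfloor 4n/3\rfloor$.

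\emph{Step 2: checking the degree condition.} The subtle point is that a queen in a row with three or more queens attacks two others, so such rows cannot occur at all. This is consistent with the count above and needs no separate treatment: the inequality only uses the fact that row-edges and column-edges are edges of a graph of maximum degree $1$, so their total is at most $k/2$. We never use diagonal attacks; we simply discard them, which can only weaken the bound in our favour.

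The main obstacle is merely making sure the edges are counted correctly: each pair of consecutive queens in a row is one edge; row and column edges are distinct; and a matching on $k$ vertices has at most $k/2$ edges. Once these are stated, the bound is a one-line computation. No result from earlier in the paper is needed beyond the remark that two distinct cells share at most one line.
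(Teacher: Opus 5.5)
Your argument is correct and is essentially the paper's. The paper observes that every queen is alone in its row or in its column, and bounds the number of queens alone in their row (respectively, column) by $2n-k$. That is exactly your inequality $(k-a)+(k-b)\leqslant k/2$, written in terms of queens rather than matching edges: a queen is alone in its row precisely when it is not an endpoint of a row-edge. Your edge formulation has the small convenience of not needing the paper's preliminary remarks that no row holds three queens and that $k>n$.
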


\begin{proof}
Let $m$ queens be placed on an $n\times n$ board for some $m$, and suppose $m>n$. No row contains
more than two queens, so there are at least $m - n$ rows with two queens, and hence there are
at most $m - 2(m - n)$, which is $ 2n - m $ queens that are alone in their rows. Similarly, at most $2n - m$
queens are alone in their columns. On the other hand, every queen is alone in its
row or column, which means that every queen is accounted for in the previous count, that is,
$  2(2n - m)\geqslant m$. Thus $m \leqslant \frac{ 4n}
3 $. As $m$ is an integer, the  conclusion follows.\end{proof}

For $n\leqslant5$ the bound of Theorem \ref{upper} is not attained: an
inspection of the finitely many configurations gives $q(n)=n$ for
$1\leqslant n\leqslant5$, whereas $\lfloor4n/3\rfloor$ equals $4$, $5$, and $6$
for $n=3,4,5$.

From $n=6$ on the bound is attained, and for small boards this is known
explicitly. Healy \cite{healy} lists an optimal configuration for every
$n$ up to $72$, and an earlier list of Resta \cite{resta} does the same for
$6\leqslant n\leqslant30$; in particular $q(n)=\lfloor4n/3\rfloor$ for
$6\leqslant n\leqslant72$. These two lists are the only external input of this
article (and, actually, only some smaller parts of them are really needed, but we decided that it is cleaner to reference them instead of presenting separate constructions for the exceptional values we need). The result is
the following.

\begin{theorem}
\label{main}
For every positive integer $n$ we have
\[
q(n)=
\left\{
\begin{array}{ll}
n,&\text{if $1\leqslant n\leqslant5$;}\\[1mm]
\left\lfloor\frac{4n}{3}\right\rfloor,&\text{if $n\geqslant6$.}
\end{array}\right.
\]
\end{theorem}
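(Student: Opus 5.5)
The upper bound $q(n)\leqslant\lfloor4n/3\rfloor$ is Theorem \ref{upper}. The values for $n\leqslant5$ come from exhaustive inspection, and for $6\leqslant n\leqslant72$ we cite Healy's table \cite{healy}. So the whole content of Theorem \ref{main} is a construction: for every $n\geqslant73$ we must exhibit an admissible configuration with $\lfloor4n/3\rfloor$ queens.

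The plan is toroidal embedding followed by corner extension. First, build toroidal blocks. On an $N\times N$ torus, take a pattern with period structure as in the $15\times15$ figure: queens placed along a fixed set of residue classes, so that every row and every column carries a prescribed number of queens (one or two). The key requirement is that each toroidal diagonal and antidiagonal contains at most one queen, and each queen is paired with at most one other. This works for suitable $N$, typically $N$ with $\gcd(N,6)$ controlled, in the spirit of P\'olya's theorem.

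Second, embed the torus into the ordinary board. Cut the torus open and place it in the $N\times N$ corner $Q$ of a larger board, shifted so that the wrapped diagonals become ordinary diagonals that do not create extra attacks. This is the ``host board'' trick, and it yields density $4/3$ in the block.

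Third, extend. Add a small optimal board $D$ of side $t$ at the bottom-right corner, as in the corner figure. Only the shaded diagonal band through $Q$ and $D$ can create interactions. We therefore choose the pattern in $Q$ so that the band's diagonals near the corner are empty, which makes $\lfloor4N/3\rfloor+\lfloor4t/3\rfloor=\lfloor4(N+t)/3\rfloor$ achievable for appropriate residues of $N$ and $t$ mod $3$. A second mechanism, the split figure, handles the remaining residues. It uses a strip of width $L$ on the left and $t-L$ on the right, with a few queens in the bottom $t$ rows placed in row-pairs. This lets $n$ be any value in a window above $N$.

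The key steps, in order, are:
\begin{enumerate}
\item Fix a family of admissible $N$.
\item Verify that the toroidal pattern restricted to the host is admissible, by checking the four line classes mod $N$.
\item Show that the extension mechanisms cover every $n$ in an interval $[N+a,N+b]$.
\item Check that the gaps between consecutive admissible $N$ are shorter than $b-a$, so that all $n\geqslant73$ are covered, possibly recursively, by using a previously built board in place of $D$.
\end{enumerate}

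The main obstacle is step 4 together with the diagonal bookkeeping in step 3. One must ensure that no diagonal crossing from $Q$ into $D$, or through the added strips, meets two queens of which one already has a partner. It must also hold for every residue of $n$ mod $3$ simultaneously, and the floors add correctly only in certain residue combinations. I would first try to make the extension recursive: take $D$ itself to be an optimal board of size $t\leqslant72$ whose corner region near $Q$ is queen-free along the relevant diagonals, taken from Healy's list. Any finitely many residual values of $n$ would then be treated by ad hoc patches.
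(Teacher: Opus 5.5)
There is a genuine gap, in your first step, which carries the whole construction. You ask for a pattern on an $N\times N$ torus with $4N/3$ queens in which every toroidal diagonal and antidiagonal contains at most one queen. This is impossible. The $N$ toroidal diagonals $r-c\equiv\mathrm{const}\pmod N$ partition the torus, so such a pattern has at most $N$ queens.

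Relaxing the requirement to admissibility on the torus does not help either. Every toroidal line is a cycle, so it holds at most two queens, since three would give each of them two neighbours. Consider three sets of queens: those paired along a row, those paired along a column, and those paired along a diagonal. Each has size at least $2(m-N)$, and they are pairwise disjoint because each queen has at most one partner. This gives $m\geqslant 6(m-N)$, i.e.\ $m\leqslant 6N/5<4N/3$.

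There is also an arithmetic clash. For $4N/3$ to be an integer you need $3\mid N$, while P\'olya-type toroidal patterns need $\gcd(N,6)=1$. So density $4/3$ cannot come from a torus at all; it must come from an ordinary board, where only the row/column count of Theorem~\ref{upper} binds. (The $15\times15$ figure you cite is $H_{15}'$, an ordinary-board configuration, not a toroidal one.)

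The missing idea is the substitution $H[B]$, which separates the two roles. A small ordinary host $H$ on an $A\times A$ board, with $3\mid A$ and $4A/3$ queens, supplies the density. Each of its queens is blown up into a $B\times B$ block carrying the density-one toroidal pattern $i\mapsto 2i\pmod B$, where $\gcd(B,6)=1$. Queens in different in-block rows cannot attack, since they would attack on the $B$-torus. Queens in the same in-block row attack only along a line joining their host queens, so admissibility is inherited from $H$ (Theorem~\ref{substthm}).

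The same construction produces what your third step only postulates. If $H$ has clearance $d\geqslant1$, then $H[B]$ leaves the band $|r-c|<dB-\frac{B-1}{2}$ empty (Lemma~\ref{clearance}). Since this width grows linearly in $B$, every optimal $t\times t$ board with $t$ up to that width fits in the corner. The windows starting at $6B$, for $B=6k\pm1$, then eventually overlap (Lemma~\ref{intervals}).

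You also misplace the role of the split extension. Because $3\mid N$, the floors add for every $t$, so residues are not the issue. The split is needed only for $t\in\{3,4,5\}$, where $q(t)=t<\lfloor4t/3\rfloor$, and it requires a host with both main diagonals empty.

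Finally, the range just above $72$ is not a matter of a few residual patches. The paper needs four different hosts (plus $H_{15}'$) to cover $75\leqslant n\leqslant281$, and separate frame constructions for $n=73$ and $n=74$.
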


In view of Theorem \ref{upper} and of the two lists just referenced, what remains
is to construct, for every $n\geqslant73$, an admissible configuration with
$\lfloor4n/3\rfloor$ queens. This is done in Section \ref{s5}.

 Theorem \ref{upper} has an easy corollary that settles the problem with queens attacking exactly one other. Namely, note that, with this requirement, in any admissible configuration queens can be split into pairs where queens in the same pair attack each other. Therefore, the number of queens in that case cannot be odd. And then we see that, for the optimal placement of queens in this variation, we can keep the same optimal configuration for our main problem whenever the number of queens is even, while when it is odd, we remove the single queen that does not attack any other queen. Therefore, we have the following corollary.

 \begin{corollary}\label{exac}
     The maximal number of queens that can be placed on an $n\times n$ board in such a way that each of them attacks exactly on other queen equals $0,2,2,4,4$ for $n=1,2,3,4,5$, respectively, and equals $2\left\lfloor\frac{2n}{3}\right\rfloor$ for $n\geqslant 6$.
 \end{corollary}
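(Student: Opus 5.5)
The corollary follows from Theorem \ref{main}, which gives $q(n)$, once we prove a matching upper and lower bound. Let $e(n)$ denote the maximum in the ``exactly one'' version. In any configuration where each queen attacks exactly one other, the attack relation is a perfect matching on the queens, so the number of queens is even. Such a configuration is in particular admissible, so $e(n)\leqslant q(n)$. Together with parity this gives
\[
e(n)\leqslant 2\left\lfloor \frac{q(n)}{2}\right\rfloor .
\]
For $n\geqslant6$ we have $q(n)=\lfloor4n/3\rfloor$. A short check of the residues of $n$ modulo $3$ shows that $2\lfloor\lfloor4n/3\rfloor/2\rfloor=2\lfloor2n/3\rfloor$, since $\lfloor\lfloor 4n/3\rfloor/2\rfloor=\lfloor 2n/3\rfloor$ by nested floors. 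For $n\leqslant5$, using $q(n)=n$, the same bound gives $0,2,2,4,4$.

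For the lower bound, take an optimal admissible configuration for $q(n)$. In an admissible configuration every queen attacks either zero or one other queen. I claim that in an optimal one, for $n\geqslant 6$, at most one queen is unattacked. If there were two isolated queens, the counting in Theorem \ref{upper} would force strict inequality somewhere. More robustly, we can simply delete isolated queens and re-add them only if needed. The cleanest route is to delete all unattacked queens: deleting a queen that attacks nobody does not change any other queen's attack count, since a removed queen was not blocking a line between two queens that would otherwise attack. This step needs care, because a queen standing between two others on a line does block them. But an isolated queen sitting between two queens on a line would attack both of them, so it cannot lie between them; the deletion is therefore safe. After deleting, every remaining queen attacks exactly one other.

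So the real issue is to show that an optimal configuration has at most one isolated queen, or, failing that, to use the explicit constructions from the paper. I would tighten the proof of Theorem \ref{upper}. Let $a$ be the number of queens alone in both their row and column, and $s$ the number of paired rows. The counting gives $2(2n-m)\geqslant m+a$, hence $a\leqslant 4n-3m$, which is at most $2$ when $m=\lfloor 4n/3\rfloor$. This does not directly control isolation on diagonals, so the main obstacle is this step. My fallback is to check that the constructions of Section \ref{s5}, and the tables of Resta and Healy, have at most one isolated queen, which holds for the toroidal patterns where queens come in attacking pairs. Removing that queen when the count is odd then gives $2\lfloor2n/3\rfloor$. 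For $n\leqslant5$, small explicit placements of $2$ or $4$ mutually paired queens suffice.
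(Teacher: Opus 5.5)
Your upper bound is the paper's argument: the attack relation is a perfect matching, so $e(n)\leqslant 2\lfloor q(n)/2\rfloor = 2\lfloor 2n/3\rfloor$ for $n\geqslant 6$. Your lower-bound idea is also the paper's: delete the unattacked queens from an optimal configuration. That deletion is indeed harmless, since an unattacked queen has no other queen on any of its four lines. The gap is that you never determine how many unattacked queens there are, and you underestimate your own counting. An unattacked queen is alone in its row and in its column, so it is one of the $a$ queens in your inequality $a\leqslant 4n-3m$; no diagonal information is needed. With $m=\lfloor 4n/3\rfloor$ this settles two residue classes at once. For $n=3k$ there is no unattacked queen. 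For $n=3k+1$ there is at most one, and parity forces exactly one, since $m=4k+1$ is odd and attacking queens come in pairs. Deleting it leaves $4k=2\lfloor 2n/3\rfloor$ queens.

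The genuine obstacle is $n=3k+2$. There $m=4k+2$, and the counting of Theorem \ref{upper} is tight with $a=2$, so your remark that two isolated queens would force a strict inequality fails in this class. Nothing in the counting prevents both such queens from being unattacked, and deleting them would leave only $4k$. So you need a concrete optimal configuration with no unattacked queen, and your fallback does not supply one.

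Your phrase ``holds for the toroidal patterns'' covers only $H_A[B]$. That part is indeed perfectly paired: $a=0$ in $H_A$ because $3\mid A$, and the attack graph of $H_A[B]$ is $B$ disjoint copies of that of $H_A$. One can also check directly that the patches for $t=2$ and $t=5$ and the frame for $n=74$ are perfectly paired. What is left unverified:
\begin{itemize}
\item the corner pieces with $t\equiv 2\pmod 3$, $t\geqslant 8$, namely $C_8$, $C_{11}$, and the entries of Resta's list used in Lemma \ref{finite};
\item Healy's configurations for $n\leqslant 72$ with $n\equiv 2\pmod 3$;
\item the arbitrary optimal corner configuration supplied by the induction in Lemma \ref{tail}, whose hypothesis would have to be strengthened to ``perfectly paired when $n\equiv 2\pmod 3$''.
\end{itemize}

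The paper's two-sentence argument takes this same fact for granted: it keeps the configuration when the count is even and deletes ``the single'' unattacked queen when it is odd. So you have identified the right difficulty. But as written, even with the correction above, your proposal gives the lower bound only for $n\geqslant 6$ with $n\not\equiv 2\pmod 3$, plus $n\leqslant 5$ once you actually exhibit the small placements. It does not cover $n\equiv 2\pmod 3$.
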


\section{Auxiliary constructions}\label{s3}

We begin with four small configurations, called the \emph{hosts.} In addition to being optimal, each of them leaves a strip of empty diagonals around the main diagonal. For a square in row $r$ and column $c$, its distance from the main diagonal is $|r-c|$. The \emph{clearance} of a host is the smallest distance attained by one of its queens.

The four side lengths we use for the hosts are $6$, $12$, $15$, and $18$. The four hosts are shown in Figure \ref{hosts}. Their clearances are, respectively, $1$, $2$, $2$, and $2$. Two of them are taken from \cite{resta}, namely, the $6\times6$ configuration  and the $12\times12$ configuration (the first one is flipped here). The $15\times15$ and $18\times18$ configurations displayed in
\cite{healy} and in \cite{resta} have queens next to the main diagonal and are
therefore of no use to us. That is why we devised  new ones ourselves. (Remark: it might catch an eye that $H_{18}$ seems quite more regular than $H_{15}$, but there is a simple explanation for this. Both of them were, of course, found by computer. We searched for $H_{15}$ without any restrictions on the search space, and this was the first one that appeared. Then we decided to make the algorithm faster by restricting it to look only for centrally symmetric configurations, hoping that such one existed, and indeed, this is how $H_{18}$ was found.) That all four are admissible
and have $8$, $16$, $20$, and $24$ queens is checked directly, and so is the following further piece of information, which
will be needed in Subsection \ref{ss33}: the main antidiagonal of the board is empty in $H_6$, in $H_{12}$, and in
$H_{18}$, but not in $H_{15}$.

\begin{figure}[htb]
\begin{center}
\fighosts
\end{center}
\caption{The four hosts. The shaded cells are the $2d-1$ central diagonals,
which a host must avoid.}
\label{hosts}
\end{figure}

\subsection{Embedment}
\label{ss31}

This is the main construction. We explain it in plain words. Let $B$ be a positive integer with $\gcd(B,6)=1$. On a $B\times B$ board, put one queen in each row, namely, in row $i$ put the queen in column $2i$, with the column number read modulo $B$.

We claim that such queens do not attack each other, and that, moreover, the same is true even if the $B\times B$ board considered is a torus. Though this is well-known, we just sketch a proof. Namely, the column numbers run through all residues modulo $B$, as $2i\equiv 2j\pmod{B}$ implies $i\equiv j\pmod B$ (since $2\nmid B$), and thus $i=j$. The same is true for both diagonal directions. Namely, the queens $(i,2i)$ and $(j,2j)$ attack along one diagonal if and only if $i-2i\equiv j-2j\pmod{B}$, which reduces to $i\equiv j\pmod B$, that is, $i=j$. Finally, they attack along the other diagonal if and only if $i+2i\equiv j+2j\pmod B$, and this again reduces to $i\equiv j\pmod B$ and finally $i=j$, where it was used that $3\nmid B$.

We now replace each queen on some initial board by this toroidal pattern (the initial board will usually be one of the hosts, but the construction described here is general). Let $H$ be an arrangement of queens on an $A\times A$ board. Divide an $AB\times AB$ board into $A^2$ blocks of size $B\times B$. Take an occupied square of $H$, and let $R$ and $C$ be its row and column. Put a translated copy of the toroidal pattern into the corresponding block. Leave the blocks belonging to empty squares empty. We denote the resulting configuration by $H[B]$. 

\begin{theorem}
\label{substthm}
Let $H$ be an admissible configuration on an $A\times A$ board, and let $B$ be any positive integer with $\gcd(B,6)=1$. Then the configuration $H[B]$ is an admissible  configuration on the $AB\times AB$ board.
\end{theorem}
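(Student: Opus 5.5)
Fix a queen $q$ of $H[B]$. It sits in the block of some occupied square $(R,C)$ of $H$, at local position $(i,2i \bmod B)$. Its global coordinates are
\[
(r,c)=((R-1)B+i,\;(C-1)B+c_i),
\]
where $c_i\equiv 2i\pmod B$ and $1\le c_i\le B$. The plan is to show that along each of the four line directions through $q$, the nearest queen of $H[B]$ in each sense lies in the block corresponding to the nearest queen of $H$ from $(R,C)$ in that same direction and sense, and that no such queen exists when $H$ has none. If this holds, the queens attacked by $q$ correspond injectively to the queens attacked by $(R,C)$ in $H$, with at most one per direction and sense. Since $H$ is admissible, $q$ then attacks at most one queen.

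\textbf{Step 1: no attacks inside a block.} Within one block, two distinct pattern queens never share a row, column, difference, or sum. This follows from the toroidal argument given above, and the argument is in fact stronger. On the torus no two queens share a line, and a line segment on the actual board projects onto part of a toroidal line. So inside a block, the line through $q$ in any direction contains no other queen of that block.

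\textbf{Step 2: a line through $q$ enters only blocks of $H$ on the corresponding line.} Rows and columns are immediate: row $r$ meets only the blocks $(R,C')$, and column $c$ only the blocks $(R',C)$. For a diagonal $r-c=\text{const}$, the issue is that it passes through block $(R,C)$ and also through the blocks $(R',C')$ with $R'-C'=R-C\pm1$. I must check that whenever it meets a queen of an occupied block $(R',C')$, we have $R'-C'=R-C$. Suppose the queen has local coordinates $(j,c_j)$. Then
\[
(R'-R)B+(j-i)=(C'-C)B+(c_j-c_i).
\]
Reducing mod $B$ gives $j-i\equiv 2j-2i$, so $j\equiv i$. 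Hence $j=i$ and $c_j=c_i$, which forces $R'-C'=R-C$. Antidiagonals are handled the same way, using $3\nmid B$ to get $3i\equiv 3j\Rightarrow i=j$. The same mod-$B$ computation disposes of rows and columns, using $2\nmid B$. Consequently every queen of $H[B]$ on a line through $q$ has the same local index $i$. In the block of each occupied square $(R',C')$ lying on the corresponding line of $H$ through $(R,C)$, there is exactly one such queen, namely the translate of $q$.

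\textbf{Step 3: ordering.} The translates of $q$ along a given line appear in the same order as the blocks $(R',C')$ appear along the corresponding line of $H$, because they are just $q$ shifted by $((R'-R)B,(C'-C)B)$. So the nearest queen to $q$ in a given sense is the translate in the block of the nearest occupied square of $H$ in that sense. The attack structure of $H[B]$ restricted to any line through $q$ is therefore isomorphic to that of $H$ through $(R,C)$, and admissibility transfers. The step needing the most care is Step 2 for the diagonals. There one must not assume that a diagonal stays within one "block-diagonal", and it is exactly the congruence argument that shows the stray blocks contain no queens on the line.
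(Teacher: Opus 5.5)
Your proof is correct and follows essentially the paper's route. The mod-$B$ congruences force any two queens of $H[B]$ on a common line to have the same local index $i$, so they are block-translates of one another, and the attack structure then transfers from $H$. Your Step 3 makes explicit the ordering point that the paper leaves implicit: a queen of $H$ lying between two queens would have a translate lying between their translates in $H[B]$.
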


\begin{proof}
For $1\leqslant R,C\leqslant A$, let the block
$(R,C)$ of the $AB\times AB$ board consist of the cells
$(B(R-1)+i,B(C-1)+j)$ with $1\leqslant i,j\leqslant B$, and let $p$ be the
permutation of $\{1,2,\dots,B\}$ with $p(i)\equiv2i\pmod B$. The queen in the row $i$ in the block $(R,C)$ occupies the cell
$(B(R-1)+i,B(C-1)+p(i))$ in $H[B]$; therefore, its difference and its sum are
\[
B(R-C)+i-p(i)\qquad\text{and}\qquad B(R+C-2)+i+p(i).
\]
Note that if two queens on $H[B]$ have their assigned $i_1$ and $i_2$ distinct (also allowing $R$ and $C$ to vary), they cannot attack each other, as that would imply something of $i_1\equiv i_2\pmod B$, $p(i_1)\equiv p(i_2)\pmod B$, $i_1-p(i_1)\equiv i_2-p(i_2)\pmod B$, or $i_1+p(i_1)\equiv i_2+p(i_2)\pmod B$, all of which would mean that the queens in rows $i_1$ and $i_2$ attack each other on the underlying $B\times B$ torus, which is impossible. Therefore, if any two queens on $H[B]$ attack each other, they have to be placed in exactly the same positions within their corresponding blocks. But this implies that their line of attack gives the corresponding line of attack between the two queens at the respective positions on the initial $A\times A$ board. As  $H$ is an admissible configuration, we altogether conclude that the configuration $H[B]$ is also admissible.
\end{proof}

\begin{corollary}
\label{substopt}
If $3\mid A$ and $H$ is an admissible configuration on an $A\times A$ board with $\frac{4A}3$ queens, then $H[B]$ is
optimal on the $AB\times AB$ board.
\end{corollary}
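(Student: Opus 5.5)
By Theorem \ref{substthm}, $H[B]$ is already an admissible configuration on the $AB\times AB$ board. So the plan is only to count its queens and compare the count with the upper bound of Theorem \ref{upper}.

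First, the count. Each occupied cell of $H$ is replaced by a block containing exactly $B$ queens, one per row of the toroidal pattern. Empty cells of $H$ give empty blocks. Since $H$ has $\frac{4A}{3}$ queens, $H[B]$ has $\frac{4A}{3}\cdot B=\frac{4AB}{3}$ queens. This is an integer because $3\mid A$.

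Second, the comparison. Theorem \ref{upper}, applied with $n=AB$, gives $q(AB)\leqslant\lfloor 4AB/3\rfloor$. Since $3\mid AB$, this bound equals exactly $\frac{4AB}{3}$. So the admissible configuration $H[B]$ attains the upper bound. Hence $q(AB)=\frac{4AB}{3}$, and $H[B]$ is optimal by definition.

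There is no real obstacle in this argument. The only point that needs care is the per-block count: the toroidal pattern places precisely one queen in each of its $B$ rows, since $i\mapsto 2i \bmod B$ is a permutation. Admissibility itself is taken wholesale from Theorem \ref{substthm}.
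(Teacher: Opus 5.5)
Your proposal is correct and matches the paper's proof: admissibility comes from Theorem~\ref{substthm}, each queen of $H$ becomes $B$ queens so that $H[B]$ has $\frac{4AB}{3}$ queens, and this meets the bound $\lfloor 4AB/3\rfloor$ of Theorem~\ref{upper}. You state explicitly the two theorems that the paper's one-line proof leaves implicit, and like the paper you take for granted the hypothesis $\gcd(B,6)=1$, which is needed for $H[B]$ to be defined.
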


\begin{proof}
As there are $\frac{4A}3$ queens in $H$, and as each of them is replaced by $B$ new queens in $H[B]$, there are $\frac{4AB}3$ queens in $H[B]$, which means that the configuration $H[B]$ is optimal.
\end{proof}

For us the embedment is particularly useful only when the small configuration stays as far as possible from the main diagonal, because the empty strip of diagonals that this creates
is what makes the extensions of Subsection \ref{ss32} possible. Note that inside its own block a queen stays rather close to its main
diagonal. Indeed, $p(i)$ equals either $2i$ or $2i-B$, so that $i-p(i)$ equals
$-i$ or $B-i$; and as $i$ runs from $1$ to $B$ these values run through
$-\frac{B-1}2,\dots,\frac{B-1}2$, each of them once. In other words, the $B$
queens of a block occupy the $B$ diagonals nearest to the main diagonal of the
block, one queen on each. The following lemma evaluates the clearance of the configuration $H_A[B]$.

\begin{lemma}
\label{clearance}
Let $H_A$ be a host and $d$ its clearance. Let $B$ be a positive integer with $\gcd(B,6)=1$. Then
every queen $(r,c)$ of $H_A[B]$ satisfies $|r-c|\geqslant M(d,B)$, where
\[
M(d,B)=dB-\frac{B-1}2.
\]
In other words, the clearance of $H_A[B]$ is at least $M(d,B).$
\end{lemma}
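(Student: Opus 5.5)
Write a queen of $H_A[B]$ in the coordinates used in the proof of Theorem \ref{substthm}: it lies in block $(R,C)$, where $(R,C)$ is an occupied cell of $H_A$, and within that block it is the queen of some row $i$, so its cell is $(B(R-1)+i,\,B(C-1)+p(i))$. The plan is to bound its difference $r-c$ directly from the two pieces of information we have.

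First, the difference splits as
\[
r-c=B(R-C)+\bigl(i-p(i)\bigr).
\]
Second, the remark preceding the lemma gives $|i-p(i)|\leqslant\frac{B-1}2$. Here we use that $p(i)$ is $2i$ or $2i-B$, so $i-p(i)$ is $-i$ or $B-i$. Since $p(i)\in\{1,\dots,B\}$, the value $-i$ occurs only when $2i\leqslant B$, and $B-i$ occurs only when $2i>B$. In both cases the absolute value is at most $\frac{B-1}2$, because $B$ is odd.

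Third, since $(R,C)$ is an occupied cell of $H_A$ and $H_A$ has clearance $d$, we have $|R-C|\geqslant d$. In the cases of interest $d\geqslant1$, so $R\neq C$.

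Combining these with the triangle inequality gives
\[
|r-c|\geqslant B|R-C|-\bigl|i-p(i)\bigr|\geqslant dB-\frac{B-1}2=M(d,B),
\]
which is the claim.

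The argument is essentially routine. The only points needing care are the exact range of $i-p(i)$, which rests on $B$ being odd (this follows from $\gcd(B,6)=1$), and the fact that the bound $|R-C|\geqslant d$ is simply the definition of clearance applied to the host. Note also that $M(d,B)>0$ whenever $d\geqslant1$, so the queens genuinely avoid a strip around the main diagonal.
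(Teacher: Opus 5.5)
Your proof is correct and follows the paper's argument: you write the difference as $B(R-C)+(i-p(i))$, bound $|i-p(i)|\leqslant\frac{B-1}2$ using that $B$ is odd, and combine this with $|R-C|\geqslant d$ via the triangle inequality. The only difference is that you derive the range of $i-p(i)$ explicitly, whereas the paper cites the paragraph preceding the lemma for it.
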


\begin{proof}
The main diagonal of the block $(R,C)$ consists of the cells with difference
$B(R-C)$, so by the preceding paragraph the difference of a queen of that block
misses $B(R-C)$ by at most $\frac{B-1}2$. It remains to recall that
$|R-C|\geqslant d$.
\end{proof}

Figure \ref{substfig} shows $H_6$ next to $H_6[5]$, an optimal configuration on
the $30\times30$ board with $40$ queens whose five central diagonals are empty,
in accordance with Lemma \ref{clearance}.

\begin{figure}[htb]
\begin{center}
\figembedment
\end{center}
\caption{The host $H_6$ and the configuration $H_6[5]$; the heavy lines mark
the $5\times5$ blocks.}
\label{substfig}
\end{figure}

\subsection{Extensions in the corner}
\label{ss32}

The embedment produces boards of side $AB$ only. To reach the sides between such ``checkpoints'' we enlarge a configuration by a few rows and columns and fill the new
part with a small configuration of its own. This works as long as the
two parts keep out of each other's way, and the empty strip of diagonals
provided by Lemma \ref{clearance} is exactly what makes this possible; see
Figure \ref{cornerfig}. The following proposition, the proof of which is omitted as it is obvious, formalizes this mechanism.

\begin{proposition}
\label{corner}
Let $M$ be a positive integer. Let $Q$ be an admissible configuration on an
$N\times N$ board all of whose queens $(r,c)$ satisfy $|r-c|\geqslant M$, and let $D$ be
an admissible configuration on a $t\times t$ board all of whose queens $(x,y)$ satisfy
$|x-y|<M$.
Move $D$ into the bottom right corner of an $(N+t)\times(N+t)$ board, that is,
replace each of its queens $(x,y)$ by $(N+x,N+y)$. Together with $Q$, this
gives an admissible configuration on the $(N+t)\times(N+t)$ board.
\end{proposition}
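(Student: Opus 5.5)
The key point will be that no line of the big board joins a queen of $Q$ to a queen of the moved copy of $D$. Once this is shown, every attack in the combined configuration is an attack already present inside $Q$ or inside $D$, and admissibility follows.

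First, rows and columns. All queens of $Q$ lie in rows and columns $1,\dots,N$. All queens of the moved $D$, that is the cells $(N+x,N+y)$, lie in rows and columns $N+1,\dots,N+t$. So no row or column contains queens from both parts.

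Next, the diagonals, i.e.\ lines of fixed difference. A queen $(r,c)$ of $Q$ has $|r-c|\geqslant M$. A moved queen $(N+x,N+y)$ has difference $x-y$, with $|x-y|<M$. The differences of the two parts are therefore disjoint sets, and no diagonal meets both.

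Then the antidiagonals, i.e.\ lines of fixed sum. For a queen of $Q$ we have $r+c\leqslant 2N$. For a moved queen, $(N+x)+(N+y)=2N+x+y\geqslant 2N+2$. So no antidiagonal meets both parts either.

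Hence every line of the $(N+t)\times(N+t)$ board carries queens of only one part. Consider two queens that attack each other in the combined configuration. They lie on a common line, so they belong to the same part. Take the case where both lie in $Q$. Any queen standing between them on that line also lies on it, hence belongs to $Q$ as well. So they attack each other in $Q$ exactly when they do in the combined board. For the moved $D$, translation by $(N,N)$ preserves all four line invariants (it shifts difference by $0$ and sum by $2N$) and preserves betweenness, so attacks there coincide with attacks in $D$. Consequently each queen attacks in the combined configuration exactly the queens it attacked in $Q$ or in $D$, which is at most one, and the configuration is admissible.

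The only step needing any thought is the diagonal case. There the clearance hypothesis $|r-c|\geqslant M$ on $Q$ is matched against the bound $|x-y|<M$ on $D$, using that the translation leaves differences unchanged. The other cases are routine.
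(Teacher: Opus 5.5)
Your proof is correct and is essentially the argument the paper has in mind. The paper omits the proof as obvious, and its Figure~\ref{cornerfig} records your key point: the strip $|r-c|<M$ contains every queen of $D$ and no queen of $Q$, while rows, columns, and sums (your bound $r+c\leqslant 2N<2N+2$) separate the two parts trivially, so every attack is inherited unchanged from $Q$ or from $D$.
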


\begin{figure}[htb]
\begin{center}
\figcorner
\end{center}
\caption{The corner extension. The shaded strip consists of the diagonals with
$|r-c|<M$: it carries no queen of $Q$, and it contains every queen of $D$.}
\label{cornerfig}
\end{figure}

We shall apply this with $Q=H_A[B]$ where the host $H_A$ has clearance $d$, so that $N=AB$ and
$M=M(d,B)$. If the number $\lfloor4t/3\rfloor$ of queens we want in the corner
is attainable there, and if the corner configuration fits inside the strip
$|x-y|<M$, then the enlarged configuration is again optimal. Indeed, its two
parts contribute $\frac{4N}3$ and $\lfloor\frac{4t}3\rfloor$ queens, and since
$\frac{4N}3$ is an integer, the total is $\lfloor\frac{4(N+t)}3\rfloor$. Three
remarks settle most of the values of $t$.

First, the smallest values need no search: for $t=1$ we add the single queen
$(N+1,N+1)$, and for $t=2$ the two queens $(N+1,N+1)$ and $(N+2,N+2)$.

Second, if $6\leqslant t\leqslant M$, then any optimal configuration on a
$t\times t$ board will do, since all its queens satisfy $|x-y|\leqslant t-1<M$
automatically.

Third, the two remaining kinds of $t$ are genuinely different, and each gets
a subsection below. For $t\in\{3,4,5\}$ no corner configuration can help,
because $q(t)=t$ and thus $q(t)<\lfloor4t/3\rfloor$ there; the enlargement
has to be arranged differently. For $t>M$ an arbitrary optimal configuration on
a $t\times t$ board need not fit inside the strip, and a specially chosen one
is required.

\subsection{Splitting the new columns}
\label{ss33}

Suppose we want to enlarge the board by $t$ rows and columns, where
$t\in\{3,4,5\}$. Instead of appending all the new columns on the right, we
append $L$ of them on the left and the other $t-L$ on the right, keeping all
$t$ new rows at the bottom; the old configuration is thereby shifted $L$
columns to the right. Up to some conditions (to be formalized in the proposition that follows), this leaves enough room for $\lfloor4t/3\rfloor$ new queens
instead of merely $t$. We use $L=1,2,2$ for $t=3,4,5$ and the following new
queens, $N$ being the side of the old board:
\[
\begin{array}{|c|c|@{\,}l@{\,}|}
\hline
t&L&\text{the new queens}\\\hline
3&1&(N+1,1),\ (N+2,1),\ (N+3,N+2),\ (N+3,N+3)\\
4&2&(N+1,N+3),\ (N+2,1),\ (N+2,2),\ (N+3,N+4),\ (N+4,N+4)\\
5&2&(N+1,N+3),\ (N+1,N+4),\ (N+2,2),\ (N+3,1),\ (N+4,N+5),\ (N+5,N+5)\\[0.4mm]\hline
\end{array}
\]
There are $4$, $5$, and $6$ of them, that is, $\lfloor4t/3\rfloor$; see
Figure \ref{splitfig}.

\begin{figure}[htb]
\begin{center}
\figsplit
\end{center}
\caption{The split extension for $t=3,4,5$.}
\label{splitfig}
\end{figure}

This idea was extensively used by Makay \cite{makay}, but unfortunately, not in full generality. Namely, he was considering only the case $L=1$ (he even mentioned the possibility of taking a larger $L$, but argued that this could bring no advantage; as we shall see, allowing $L=2$ can actually have an important impact). Though he \emph{ad-hoc} generated such extensions up to $t=28$, the bottleneck was the fact that such extensions were missing for $t\in\{4,6,7\}$ (he actually found them to be impossible, for the form he used). All this substantially limited the efficiency of the resulting construction. Our extension for $t=3$ is the same as Makay's extension, but for the cases $t=4$ and $t=5$ we go to $L=2$, and it turns out that this is all we need of this form: no extension with $t>5$ will be used. (One more clarification: though there exists an extension for $t=5$ and $L=1$, it is of limited usability to us; the one that we use, with $L=2$, is more productive.)

\begin{proposition}
\label{split}
Let $3\mid A$. Let $H$ be an admissible configuration on an $A\times A$ board with $\frac{4A}3$ queens, having no queen on
either of the two main diagonals. Let $B$ be an integer with $B\geqslant5$ and
$\gcd(B,6)=1$, let $t\in\{3, 4, 5\}$, and put $N=AB$. Then the queens
of $H[B]$, shifted $L$ columns
to the right, together with the new queens listed above, form an optimal
configuration on the $(N+t)\times(N+t)$ board.
\end{proposition}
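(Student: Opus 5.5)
The plan is to verify the count and then admissibility separately, reducing admissibility to the fact that every line through a new queen avoids the shifted copy of $H[B]$.

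\textbf{Count.} By Corollary~\ref{substopt}, $H[B]$ has $\frac{4N}{3}$ queens, and the table adds $\lfloor 4t/3\rfloor$ new ones. Since $\frac{4N}{3}$ is an integer, the total is $\lfloor 4(N+t)/3\rfloor$. Theorem~\ref{upper} then gives optimality once admissibility is shown. Shifting $H[B]$ by $L$ columns is a translation, so by Theorem~\ref{substthm} the old queens are admissible among themselves.

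\textbf{Reduction.} It therefore suffices to prove that no line through a new queen contains an old queen. Then old queens acquire no new attackers, and the new queens interact only with each other. That last part is a finite check on the table: in each case the new queens form pairs attacking along a row, a column or a diagonal, plus possibly singletons. Without this reduction, old queens could be blocked or shielded in ways that are hard to track.

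\textbf{Coordinates.} The old queens occupy rows $1,\dots,N$ and columns $L+1,\dots,L+N$. The new queens lie in rows $N+1,\dots,N+t$, and in columns $1,\dots,L$ (left part) or $N+L+1,\dots,N+t$ (right part). Rows and columns through new queens therefore never meet old queens. For an old queen with original cell $(r,c')$ in $H[B]$, the shifted difference is $r-c'-L$ and the shifted sum is $r+c'+L$.

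\textbf{Right part.} A new queen $(N+k,N+L+j)$ with $1\le j\le t-L$ has difference $k-j-L$. It can therefore only meet an old queen with $r-c'=k-j$. Because $H$ has no queen on the main diagonal, its clearance is at least $1$, and Lemma~\ref{clearance} gives $|r-c'|\ge M(1,B)=\frac{B+1}2\ge3$ for $B\ge5$. In the table every right queen has $|k-j|\le2$ (for instance $(N+5,N+5)$ with $L=2$ gives $k-j=2$), so no conflict arises. The sum of a right queen is at least $2N+L+2$, while old sums are at most $2N+L$, so antidiagonals are fine as well.

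\textbf{Left part: diagonals.} A new queen $(N+k,c)$ with $c\le L$ has difference $N+k-c$. Matching an old queen would need $r-c'=N+k-c+L\ge N+1$, which is impossible.

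\textbf{Left part: antidiagonals (main step).} Here a match needs $r+c'=N+k+c-L$, which lies in a window just after $N+1$. For this I will prove a sum analogue of Lemma~\ref{clearance}: no queen of $H[B]$ has sum $N+1$ or $N+2$.

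\begin{itemize}
\item In block $(R,C)$ the sum is $B(R+C-2)+s$ with $s=i+p(i)$, where $p(i)\in\{2i,\,2i-B\}$.
\item One checks $3\le s\le 2B$. The minimum $s=3$ comes from $i=1$; when $p(i)=2i-B$ the value is $3i-B\ge\frac{B+3}2$; and the maximum $2B$ comes from $i=B$.
\item Since $H$ avoids the main antidiagonal, every occupied block has $R+C\le A$ or $R+C\ge A+2$.
\item The first case gives sums at most $B(A-2)+2B=N$; the second gives sums at least $AB+3=N+3$.
\end{itemize}

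It then remains to read off from the table that every left queen has $N+k+c-L\in\{N+1,N+2\}$. For example, with $t=5$ the queen $(N+3,1)$ gives $N+3+1-2=N+2$. This is exactly why $L=2$ works for $t=4,5$.

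I expect this antidiagonal window computation to be the delicate point, since it is where the hypotheses on the main antidiagonal and on $B\ge5$ are genuinely used. What remains is the routine mutual-attack check among the new queens, which I will do from the coordinates in the table.
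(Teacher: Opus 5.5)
Your proposal is correct and takes essentially the same route as the paper. Rows and columns are disjoint, the right-hand queens lie within distance $2$ of the shifted main diagonal while Lemma~\ref{clearance} gives clearance at least $M(1,B)\geqslant3$, and the antidiagonals of $H[B]$ with sums $N+1$ and $N+2$ are empty, which handles the left-hand queens. Your bound $3\leqslant i+p(i)\leqslant 2B$ is an explicit form of the paper's remark that the main antidiagonal of $H$ is empty and the cell $(1,1)$ of each $B\times B$ block is unoccupied; your remaining checks (the reduction, left-queen diagonals, right-queen antidiagonals) fill in what the paper treats as obvious.
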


\begin{proof}
The number of queens is right, so only admissibility is at issue. Old and new
queens occupy disjoint sets of rows, and also disjoint sets of columns, so only
differences and sums can cause trouble.

Among the new queens there is nothing to check beyond what the table shows: for
$t=3$ a pair in a column and a pair in a row; for $t=4$ a pair in a row, a pair
in a column, and one queen meeting nobody; for $t=5$ a pair in a row, a pair on
an antidiagonal, and a pair in a column. Apart from these occasions, the four
numbers of the new queens are pairwise different, which one reads off the table
at a glance.

It remains to check that an old and a new queen never attack each other. Clearly, this is never the case for horizontal and vertical attacks, so it remains to take care of diagonal attacks.

Note that all the new queens in the right-hand columns belong to the extension of either the main diagonal of $H[B]$, or a diagonal at distance at most $2$ from the main diagonal. As the clearance of $H$ is at least $1$ (since it was assumed that $H$ has no queens on the main diagonal), Lemma \ref{clearance} gives that the clearance of $H[B]$ is at least $3$. Therefore, we conclude that none of the new queens in the right-hand columns attack an old queen.

Regarding the new queens in the left-hand columns, we note the following. The main antidiagonal of $H[B]$ is empty, as well as the antidiagonal immediately below it. The first claim follows as the main antidiagonal of $H$ is empty (by the assumption), and the second claim follows by the same observation, together with additionally acknowledging that the cell $(1,1)$ is empty in our $B\times B$ board. This completes the proof.
\end{proof}

Of our four hosts, $H_6$, $H_{12}$, and $H_{18}$ have both main diagonals empty,
as recorded in the beginning of Section \ref{s3}, and Proposition \ref{split} applies to them.
The host $H_{15}$ does not qualify, and we cannot simply replace it by a better
$15\times15$ host, since its clearance $d=2$ is needed elsewhere. Instead we
keep a second $15\times15$ configuration for this one purpose, namely the one drawn in Figure \ref{sfifteenfig}. It is optimal, and both of its main
diagonals are empty, so Proposition \ref{split} does apply to it. Its clearance is only $1$, and this is why the two
$15\times15$ configurations are kept side by side: $H_{15}$ serves the
extensions of Proposition \ref{corner}, and $H_{15}'$ those of
Proposition \ref{split}. (Of course, it would be nice to have a unique $15\times 15$ configuration that serves both purposes: that is, that has clearance $2$, and empty main antidiagonal. However, such a configuration does not exist, we checked exhaustively.)

\begin{figure}[htb]
\begin{center}
\figsfifteen
\end{center}
\caption{The configuration $H_{15}'$. The shaded cells form the two main
diagonals, which Proposition \ref{split} requires to be empty.}
\label{sfifteenfig}
\end{figure}

\subsection{Four corner configurations close to the diagonal}
\label{ss42}

Proposition \ref{corner} asks the corner configuration to satisfy $|x-y|<M$,
and for $t>M$ this has to be arranged by hand. Only the values
$8\leqslant t\leqslant11$, with $M\geqslant7$, will occur in Section \ref{s5},
and for these we use the configurations $C_8,C_9,C_{10},C_{11}$
drawn in Figure \ref{cornersfig}. They have $10$, $12$, $13$, and $14$ queens
(that is, $\lfloor4t/3\rfloor$ in each case), they are admissible, and each of
them satisfies $|x-y|\leqslant6$. So by Proposition \ref{corner} any of them
may be put in the corner of a configuration whose clearance $M$ is at least
$7$. Of the four, $C_8$ is the $8\times8$ configuration of \cite{resta} and
$C_9$ is its $9\times9$ configuration reflected; the $10\times10$ and
$11\times11$ configurations displayed there have queens with $|x-y|=8$ and
would not do.

\begin{figure}[htb]
\begin{center}
\figcorners
\end{center}
\caption{The corner configurations $C_8$, $C_9$, $C_{10}$, and $C_{11}$. The
shaded cells are those with $|x-y|\geqslant7$.}
\label{cornersfig}
\end{figure}

\section{Proof of the main theorem}
\label{s5}

It remains to produce an optimal configuration on an $n\times n$ board for
every $n$ from $73$ onward. Everything we have prepared is summarized in the
following lemma, which says what one block of the construction yields: from a
host $H_A$ and a multiplier $B$ we obtain a whole interval of board sides,
beginning at $AB$.

\begin{lemma}
\label{block}
Let $A\in\{6, 12, 15, 18\}$, let $d$ be the clearance of the host
$H_A$, and let $B$ be an integer with $B\geqslant5$ and $\gcd(B,6)=1$. Put
$N=AB$ and $M=M(d,B)$, and assume $M\geqslant7$. If $q(t)=\lfloor4t/3\rfloor$ for every $t$ with
$6\leqslant t\leqslant M$, then
\[
q(n)=\left\lfloor\frac{4n}{3}\right\rfloor
\qquad\text{for every $n$ with $N\leqslant n\leqslant N+\max\{M,11\}$.}
\]
\end{lemma}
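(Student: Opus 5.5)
We prove the lemma by writing $n=N+t$ with $0\leqslant t\leqslant\max\{M,11\}$ and producing, for each $t$, an admissible configuration with $\lfloor 4n/3\rfloor$ queens. Since $3\mid A$, the number $\frac{4N}{3}$ is an integer, so $\lfloor 4n/3\rfloor=\frac{4N}{3}+\lfloor 4t/3\rfloor$. Theorem \ref{upper} supplies the matching upper bound, so constructions are all that is needed. Throughout, the base configuration is $Q=H_A[B]$. By Theorem \ref{substthm} and Corollary \ref{substopt} it is admissible and carries $\frac{4N}{3}$ queens, and by Lemma \ref{clearance} all of its queens satisfy $|r-c|\geqslant M$. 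Note that $M\geqslant 1$, since $d\geqslant 1$ for every host.

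We split into cases according to $t$.
\begin{itemize}
\item $t=0$: $Q$ itself is the required configuration.
\item $t=1,2$: add the diagonal queens $(N+1,N+1)$, respectively also $(N+2,N+2)$, in the bottom right corner. These corner configurations have $|x-y|=0<M$ and are trivially admissible, so Proposition \ref{corner} applies.
\item $6\leqslant t\leqslant M$: the hypothesis gives an optimal $t\times t$ configuration $D$. Every queen of $D$ satisfies $|x-y|\leqslant t-1<M$, so Proposition \ref{corner} applies and yields $\frac{4N}{3}+\lfloor 4t/3\rfloor$ queens.
\item $M<t\leqslant 11$ (nonempty only when $M\leqslant 10$): since $M\geqslant 7$, we have $t\in\{8,9,10,11\}$. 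We use the corner configuration $C_t$ of Subsection \ref{ss42}. It has $\lfloor 4t/3\rfloor$ queens and satisfies $|x-y|\leqslant 6<7\leqslant M$, so Proposition \ref{corner} applies once more.
\item $t\in\{3,4,5\}$: Proposition \ref{corner} is useless here, because $q(t)=t$. Instead we invoke Proposition \ref{split}. This requires a host with $\frac{4A}{3}$ queens and both main diagonals empty, and the same $B\geqslant 5$ with $\gcd(B,6)=1$. For $A\in\{6,12,18\}$ we use $H_A$ itself, as recorded at the start of Section \ref{s3}. For $A=15$ we use $H_{15}'$ instead of $H_{15}$. This gives an optimal configuration on the $(N+t)\times(N+t)$ board, with the same $N=AB$.
\end{itemize}

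These cases cover every $t$ from $0$ to $\max\{M,11\}$, so the lemma follows.

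The main obstacle is bookkeeping rather than any single step: making sure the cases are exhaustive, and being careful that the split construction for $A=15$ uses a different host than the corner construction. The latter is harmless, because both constructions are built on the same $N=15B$, and Proposition \ref{split} does not use the clearance $d$ beyond $d\geqslant 1$. One should also record that $t=6$ and $t=7$ always fall in the case $6\leqslant t\leqslant M$, because $M\geqslant 7$. Finally, values $t\leqslant 11$ exceeding $M$ can only lie in $\{8,\dots,11\}$, which is exactly the range covered by the configurations $C_t$.
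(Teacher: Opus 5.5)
Your proposal is correct and follows the paper's proof essentially step for step. It uses the same base configuration $H_A[B]$ and the same case split: $t=0$; $t\in\{1,2\}$ via diagonal corner queens; $t\in\{3,4,5\}$ via Proposition \ref{split}, with $H_{15}'$ substituted for $H_{15}$; $6\leqslant t\leqslant M$ via an arbitrary optimal corner configuration; and $M<t\leqslant 11$ via $C_t$. It closes with the same count $\frac{4N}{3}+\lfloor 4t/3\rfloor=\lfloor 4n/3\rfloor$ against Theorem \ref{upper}.
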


\begin{proof}
Write $n=N+t$. By Corollary \ref{substopt} and Lemma \ref{clearance}, the
configuration $H_A[B]$ is optimal on the $N\times N$ board and all its queens
satisfy $|r-c|\geqslant M$, so Proposition \ref{corner} may be applied to it
with any corner configuration lying inside the strip $|x-y|<M$. For $t=0$ there
is nothing to add, and $t=1$, $t=2$ were dealt with near the end of
Subsection \ref{ss32}.

For $t\in\{3,4,5\}$ we use Proposition \ref{split} instead, with $H_A$ if
$A\neq15$ and with $H_{15}'$ if $A=15$. For $6\leqslant t\leqslant M$ we put an
optimal configuration on a $t\times t$ board in the corner; by assumption it
has $\lfloor4t/3\rfloor$ queens, and it fits inside the strip. There remain the values of
$t$ with $M<t\leqslant11$; as $M\geqslant7$, such a $t$ satisfies
$8\leqslant t\leqslant11$, and we put $C_t$ in the corner, which is allowed
because, for any queen $(x,y)$ on $C_t$, we have $|x-y|\leqslant6<7\leqslant M$.

In each case the resulting configuration is admissible and has
$\lfloor\frac{4n}3\rfloor$ queens, as counted after Proposition \ref{corner},
so it is optimal by Theorem \ref{upper}.
\end{proof}

\subsection{The sides $73$ and $74$}
\label{s51}

Lemma \ref{block} does not reach $73$ and $74$. Indeed, since $B\geqslant5$,
the products $AB$ not exceeding $74$ are $30$, $42$, and $66$ for $A=6$, and $60$
for $A=12$, and there are none for $A=15$ and $A=18$; the corresponding
clearances are $3$, $4$, $6$, and $8$. The first three are below $7$, so
Lemma \ref{block} is unavailable for them, and Proposition \ref{corner} by itself pushes
$66$ up to at most $72$; the last product is pushed up to at most $71$.

So we start from $H_6[11]$ (which is an optimal configuration on the
$66\times66$ board with $88$ queens), and place it inside a larger board so that
an empty frame is left around it. For $n=73$ we shift $H_6[11]$ four rows down
and four columns to the right, so that it occupies the rows and the columns
$5,6,\dots,70$, and add the nine queens
\[
\{(1,2),(1,71),(2,4),(3,1),(4,3),(4,72),(71,4),(72,1),(73,73)\},
\]
all of which lie in the frame. For $n=74$ we shift
$H_6[11]$  four rows down and five columns to the right and add the ten queens
\[
\{(1,4),(2,72),(3,74),(4,3),(4,5),(71,2),(71,73),(72,4),(73,1),(74,74)\}.
\]
This is illustrated in Figure \ref{fig73}.
Both configurations are admissible, as a direct check 
shows. The first one has $97$ queens and the second one $98$, which are the respective
values of $\lfloor4n/3\rfloor$; hence $q(73)=97$ and $q(74)=98$.

\begin{remark}
    Basically, what we did here, is that our $H_6[11]$ configuration is extended to the size $+7$, respectively $+8$. This is not a specific property of $H_6[11]$; the same patches work for some other boards too, and furthermore, similar patches can be designed to obtain $+6$, $+9$, $+10$, and $+11$ extensions of various boards. Everything together can handle all the values from $n=31$ onward, with the following exceptions: $n\in\{39, 40, 54, 55, 56, 59\}$. As explained in Introduction, once the table \cite{healy} appeared, we decided to abandon this path and instead present the work in a more clear manner, settling the cases from $n=73$ onward.
\end{remark}

\begin{figure}[htb]
\begin{center}
\figbigboard
\end{center}
\caption{Optimal configurations on the $73\times73$ and $74\times74$ boards; only the four corners of each board are drawn. The
shaded cells are those of the shifted copy of $H_6[11]$.}
\label{fig73}
\end{figure}

\subsection{The range $75\leqslant n\leqslant281$}
\label{s52}

Here we handle what the subsection title says.

\begin{lemma}
\label{finite}
We have $q(n)=\lfloor4n/3\rfloor$ for every $n$ with $75\leqslant n\leqslant281$.
\end{lemma}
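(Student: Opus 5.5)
The plan is to cover the whole range $75\leqslant n\leqslant281$ by finitely many applications of Lemma~\ref{block}, one for each suitable pair $(A,B)$. The hypothesis of that lemma asks that $q(t)=\lfloor4t/3\rfloor$ for $6\leqslant t\leqslant M$. In every application below we will have $M\leqslant26<72$, so this hypothesis is supplied directly by Healy's table \cite{healy}, and no induction is needed. The remaining conditions are $B\geqslant5$, $\gcd(B,6)=1$ and $M\geqslant7$. Under these, each pair yields the interval $[AB,\,AB+\max\{M,11\}]$, where $M=M(d,B)=dB-\frac{B-1}2$. The host $H_6$ has $d=1$, so $M=\frac{B+1}2$. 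The hosts $H_{12},H_{15},H_{18}$ have $d=2$, so $M=\frac{3B+1}2$.

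Listing the intervals obtained, in increasing order of $AB$, gives the following table.
\[
\begin{array}{ll}
(15,5)\colon[75,86], & (6,13)\colon[78,89],\\
(12,7)\colon[84,95], & (18,5)\colon[90,101],\\
(6,17)\colon[102,113], & (15,7)\colon[105,116],\\
(6,19)\colon[114,125], & (18,7)\colon[126,137],\\
(12,11)\colon[132,149], & (6,23)\colon[138,150],\\
(6,25)\colon[150,163], & (12,13)\colon[156,176],\\
(15,11)\colon[165,182], & (6,29)\colon[174,189],\\
(6,31)\colon[186,202], & (15,13)\colon[195,215],\\
(12,17)\colon[204,230], & (6,37)\colon[222,241],\\
(12,19)\colon[228,257], & (6,41)\colon[246,267],\\
(15,17)\colon[255,281]. &
\end{array}
\]
For each entry one checks $B\geqslant5$, $\gcd(B,6)=1$ and $M\geqslant7$. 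The smallest value of $M$ in the list is $M=7$, attained at $(6,13)$. The largest is $M=26$, attained at $(12,17)$ and $(15,17)$.

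The final step is to verify that consecutive intervals overlap or abut, so that their union is exactly $[75,281]$. The list is arranged so that each left endpoint is at most one more than the largest right endpoint reached so far. The tightest junctions are $101\to102$ and $125\to126$, which abut with no gap, and the last interval ends exactly at $281$.

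The only real obstacle is bookkeeping. The sparse stretches are those where only $H_6$-blocks with small $M$ are available, notably around $100$ and $125$. There the floor $\max\{M,11\}$ together with the corner configurations $C_8,\dots,C_{11}$ is exactly what is needed to bridge the gaps. I would double-check those junctions, and the arithmetic of $M$ for each pair, most carefully.
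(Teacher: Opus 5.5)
Your proposal is correct and is essentially the paper's proof: you cover $[75,281]$ by the intervals $[AB,\,AB+\max\{M,11\}]$ supplied by Lemma~\ref{block}, using a somewhat different but valid selection of pairs $(A,B)$ (for instance $(12,19)$ and $(6,41)$ instead of the paper's $(18,11)$ and $(18,13)$), with the auxiliary sides $t\leqslant M$ taken from the published tables. The only slip is your claim that $M\leqslant26$ throughout: the pair $(12,19)$ gives $M=\frac{3\cdot19+1}{2}=29$ (your interval $[228,257]$ already uses this value), which is harmless since $29<72$.
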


\begin{proof}
Each row of the table below is one application of Lemma \ref{block}; the last
column is the interval $[AB,\ AB+\max\{M,11\}]$ that the row delivers.
\[
\begin{array}{|c|c|c|c|l|}
\hline
A&B&N=AB&M&\ \text{sides obtained}\\\hline
15&5&75&8&\ 75\text{--}86\\
6&13&78&7&\ 78\text{--}89\\
18&5&90&8&\ 90\text{--}101\\
6&17&102&9&\ 102\text{--}113\\
6&19&114&10&\ 114\text{--}125\\
18&7&126&11&\ 126\text{--}137\\
12&11&132&17&\ 132\text{--}149\\
6&25&150&13&\ 150\text{--}163\\
12&13&156&20&\ 156\text{--}176\\
6&29&174&15&\ 174\text{--}189\\
6&31&186&16&\ 186\text{--}202\\
18&11&198&17&\ 198\text{--}215\\
12&17&204&26&\ 204\text{--}230\\
6&37&222&19&\ 222\text{--}241\\
18&13&234&20&\ 234\text{--}254\\
15&17&255&26&\ 255\text{--}281\\\hline
\end{array}
\]
Every $B$ occurring here satisfies $\gcd(B,6)=1$ and $B\geqslant5$, and every
$M$ is at least $7$, so Lemma~\ref{block} is indeed applicable, provided that
$q(t)=\lfloor4t/3\rfloor$ for the relevant auxiliary sides $t$. These satisfy
$6\leqslant t\leqslant26$, and for such $t$ the required configurations are
listed in \cite{resta}.

It remains to observe that the sixteen intervals in the last column cover
$\{75,76,\dots,281\}$, which they do: each of them begins before or immediately after the previous
one ends.
\end{proof}

\subsection{The sides $n\geqslant282$}
\label{s53}

From here on the host $H_6$ alone suffices, as the intervals produced by
Lemma \ref{block} leave no gaps. This is more formally presented in the following two lemmas.

\begin{lemma}
\label{intervals}
Every integer $n$, $n\geqslant282$, lies in one of the intervals
\[
[36k-6,\ 39k-6],\qquad [36k+6,\ 39k+7]\qquad (\text{for $k\geqslant8$}),
\]
which are the intervals delivered by Lemma \ref{block} for $A=6$ and
$B=6k-1$, respectively $B=6k+1$.
\end{lemma}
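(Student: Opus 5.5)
The proof has two parts. First we check that the stated intervals are exactly what Lemma~\ref{block} delivers for $A=6$. Then we check that these intervals, for $k\geqslant8$, leave no gap beyond $281$.

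For the identification, recall that $H_6$ has clearance $d=1$, so by Lemma~\ref{clearance} $M=M(1,B)=B-\frac{B-1}{2}=\frac{B+1}{2}$.

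For $B=6k-1$ we get $N=36k-6$ and $M=3k$. For $B=6k+1$ we get $N=36k+6$ and $M=3k+1$. In both cases $\gcd(B,6)=1$. When $k\geqslant8$ we have $B\geqslant47\geqslant5$ and $M\geqslant24\geqslant7$, and hence $\max\{M,11\}=M$.

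So Lemma~\ref{block} yields $[36k-6,\,39k-6]$ and $[36k+6,\,39k+7]$, as stated. Strictly, this also requires the hypothesis $q(t)=\lfloor 4t/3\rfloor$ for $6\leqslant t\leqslant M$. That is supplied by the tables of \cite{resta,healy} for small $t$, and otherwise by induction on $n$, since $M<N$. Only the interval endpoints matter for the present covering claim, so I will simply record the identification.

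For the covering, I check that consecutive intervals overlap or abut. There are two kinds of junction.

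\emph{Junction 1.} The first interval for $k$ must meet the second interval for $k$. This requires $36k+6\leqslant 39k-6+1$, i.e.\ $3k\geqslant11$, which holds for $k\geqslant4$.

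\emph{Junction 2.} The second interval for $k$ must meet the first interval for $k+1$. This requires $36(k+1)-6\leqslant 39k+7+1$, i.e.\ $3k\geqslant22$, so $k\geqslant 8$ (for $k=7$ it fails by one).

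Hence, for $k\geqslant8$, the union of all the intervals is the ray starting at $36\cdot8-6=282$. It is unbounded because the left endpoints increase to infinity. So every $n\geqslant282$ lies in one of them.

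The only delicate point is Junction 2, where the slack is small; it is precisely what forces the threshold $k\geqslant8$, and hence the starting value $282$. The rest is routine arithmetic.
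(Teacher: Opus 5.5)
Your proof is correct and is essentially the paper's argument. You use the same identification $M(1,B)=\frac{B+1}{2}$, giving $M=3k$ and $M=3k+1$ with $\max\{M,11\}=M$, and the same two junction inequalities reducing to $3k\geqslant11$ and $3k\geqslant22$, so that the chain of intervals starting at $[282,306]$ covers every integer from $282$ on; your remark on the hypothesis $q(t)=\lfloor4t/3\rfloor$ matches how the paper defers it to the strong induction in Lemma~\ref{tail}.
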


\begin{proof}
Both choices of $B$ are coprime to $6$. For $B=6k-1$ we have $N=6B=36k-6$ and
\[
M(1,B)=B-\frac{B-1}2=\frac{B+1}2=3k,
\]
which for $k\geqslant8$ exceeds $11$; so Lemma \ref{block} delivers the sides
from $36k-6$ to $39k-6$. For $B=6k+1$ we get $N=36k+6$ and $M(1,B)=3k+1$,
hence the sides from $36k+6$ to $39k+7$.

Consecutive intervals of the sequence
$[36\cdot8-6,\,39\cdot8-6],\ [36\cdot8+6,\,39\cdot8+7],\ [36\cdot9-6,\,39\cdot9-6],\dots$
overlap or immediately follow one another, because for $k\geqslant8$ we have
\[
(39k-6)+1\geqslant36k+6\qquad\text{and}\qquad(39k+7)+1\geqslant36(k+1)-6;
\]
indeed, these inequalities are equivalent to $3k\geqslant11$ and $3k\geqslant22$. The first
interval of the sequence is $[282,306]$, so their union is $[282,\infty)$.
\end{proof}

\begin{lemma}
\label{tail}
We have $q(n)=\lfloor4n/3\rfloor$ whenever $n\geqslant282$.
\end{lemma}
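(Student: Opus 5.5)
The plan is a strong induction on $n$ that combines Lemma \ref{intervals} with Lemma \ref{block}. Fix $n\geqslant282$. By Lemma \ref{intervals}, $n$ lies in an interval $[AB,\,AB+\max\{M,11\}]$ with $A=6$, $B\in\{6k-1,6k+1\}$ and $k\geqslant8$. The relevant parameters are:
\begin{itemize}
\item the host $H_6$ has clearance $d=1$;
\item $B\geqslant47$, so $B\geqslant5$ and $\gcd(B,6)=1$;
\item $M=M(1,B)\in\{3k,3k+1\}$, so $M\geqslant24\geqslant7$.
\end{itemize}
Thus every structural hypothesis of Lemma \ref{block} holds. The only remaining hypothesis is that $q(t)=\lfloor4t/3\rfloor$ for all $t$ with $6\leqslant t\leqslant M$, and verifying this is the whole content of the proof.

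To supply these auxiliary values, I will induct on $n$ and assume the claim for all sides in $[282,n)$. The values below $282$ are already known:
\begin{itemize}
\item $6\leqslant t\leqslant72$ from \cite{healy} (and \cite{resta});
\item $t=73,74$ from Subsection \ref{s51};
\item $75\leqslant t\leqslant281$ from Lemma \ref{finite}.
\end{itemize}
Together these give $q(t)=\lfloor4t/3\rfloor$ for every $6\leqslant t\leqslant281$. It therefore suffices to show $M<n$, so that every $t\leqslant M$ is either below $282$ or covered by the induction hypothesis.

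The inequality $M<n$ is immediate. We have $M\leqslant3k+1$, while $n\geqslant36k-6>3k+1$ for $k\geqslant1$. In fact $M$ is tiny compared with $N=AB\leqslant n$: $M\approx N/12$. Hence all required corner sides $t\leqslant M$ are strictly smaller than $n$, and Lemma \ref{block} yields $q(n)=\lfloor4n/3\rfloor$.

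The one point needing care, and the only real obstacle, is that the induction is well-founded. We need that Lemma \ref{block}, applied to $n$, invokes only $q(t)$ for $t\leqslant M$, and that $M<282\leqslant n$ or at least $M<n$. This is the comparison $3k+1<36k-6$ above, so the recursion never calls a side at least as large as the current one.

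Alternatively, the induction can be avoided. For $k$ large, $M$ may exceed $281$, so one could order the intervals by $k$ and observe that every $M$ arising from the interval with parameter $k$ lies in an interval with a smaller parameter or below $282$. This is the same argument phrased as induction on $k$ rather than on $n$.
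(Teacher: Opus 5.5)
Your proof is correct and is essentially the paper's own argument. Both use strong induction on $n$ with base range $6\leqslant t\leqslant281$, pick $k\geqslant8$ and $B=6k\pm1$ via Lemma~\ref{intervals}, check $B\geqslant47$ and $M\in\{3k,3k+1\}$ with $M\geqslant24$ so that Lemma~\ref{block} applies with $A=6$, and obtain the auxiliary values $q(t)$ for $6\leqslant t\leqslant M$ from $M<36k-6\leqslant n$. Your alternative phrasing as an induction on $k$ is equivalent and equally valid.
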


\begin{proof}
We argue by strong induction on $n$. By the arguments up to Subsection \ref{s52}, the assertion holds for every side between $6$ and
$281$.

Let $n\geqslant282$ and assume that the assertion holds for all smaller sides. Choose
$k\geqslant8$ and $B=6k\pm1$ such that $n$ belongs to the corresponding interval of
Lemma \ref{intervals} (or to any such one, if there is a choice), and put $M=M(1,B)$, which is $3k$ or $3k+1$. As
$B\geqslant47$ and $M\geqslant24$, Lemma \ref{block} may be applied to $A=6$
and this $B$, provided that $q(t)=\lfloor4t/3\rfloor$ for $6\leqslant t\leqslant M$;
and this is guaranteed by the induction hypothesis, since $M<36k-6\leqslant n$.
\end{proof}

Finally, we are ready to sum up everything.

\begin{proof}[Proof of Theorem \ref{main}]
The upper bound $q(n)\leqslant\lfloor4n/3\rfloor$ is Theorem \ref{upper}, and
$q(n)=n$ for $n\leqslant5$ was noted in Section \ref{s2}. The bound is attained
for $6\leqslant n\leqslant72$ by the configurations of \cite{healy}, for
$n=73$ and $n=74$ by the two configurations of Subsection \ref{s51}, for
$75\leqslant n\leqslant281$ by Lemma \ref{finite}, and for $n\geqslant282$ by
Lemma \ref{tail}.
\end{proof}

\section*{Acknowledgments}

The authors were supported by the Ministry of Science, Technological Development and Innovation of the Republic of Serbia (grants no. 451-03-33/2026-03/200125 and 451-03-34/2026-03/200125 for the first two authors, and grant no. 451-03-34/2026-03/200156 for the third author).

\sloppy

\end{document}